\newcommand{\intequiv}{\overset{\mathrm{int}}{\equiv}}
\newtheorem{theorem}{Theorem}[section]
\newtheorem{proposition}[theorem]{Proposition}
\newtheorem{lemma}[theorem]{Lemma}
\newtheorem{corollary}[theorem]{Corollary}
\newtheorem*{theorem*}{Theorem}
\newtheorem*{theorem1*}{Theorem \ref{thm:main}}
\newtheorem*{theorem2*}{Theorem \ref{thm2}}
\newtheorem*{theorem3*}{Theorem \ref{thm3}}
\newtheorem*{cor1*}{Corollary \ref{cor1}}
\theoremstyle{definition}
\newtheorem{definition}[theorem]{Definition}
\newtheorem{example}[theorem]{Example}
\newtheorem{remark}[theorem]{Remark}
\numberwithin{equation}{section}
\def\F{\mathcal{F}}
\def\Cat{{\rm Cat}}
\def\relvol{{\rm relvol}}
\def\vol{{\rm vol}}
\def\CRY{{\mathcal{CRY}}}
\def\ASM{{\mathcal{ASMCRY}(n)}}
\def\F{{\mathcal{F}}}
\def\O{{\mathcal{O}}}
\def\I{{\mathcal{I}}}
\def\P{{\mathcal{P}}}
\def\ini{{\rm in }}
\def\fin{{\rm fin}}
\def\R{\mathbb{R}}
\def\v{{\rm v}}
\def\aff{{\rm aff}}
\def\ifl{{\rm ifl}}
\def\integer{{\rm int}}
\newcommand{\ee}{\end{equation}}
\newcommand{\bd}{\begin{definition}}
\newcommand{\ed}{\end{definition}}
\newcommand{\bt}{\begin{theorem}}
\newcommand{\et}{\end{theorem}}
\newcommand{\bl}{\begin{lemma}}
\newcommand{\el}{\end{lemma}}
\newcommand{\bp}{\begin{proposition}}
\newcommand{\ep}{\end{proposition}}
\newcommand{\bc}{\begin{corollary}}
\newcommand{\ec}{\end{corollary}}
\def\i{{\it in}}
\def\out{{\it out}}
\def\In{{\it In}}
\def\Out{{\it Out}}
\newcommand{\old}[1]{}
\author[K.\ M\'esz\'aros, A.H.\ Morales, J.\ Striker]{Karola M\'esz\'aros, Alejandro H.  Morales, Jessica Striker}
\thanks{KM was partially supported by a National Science
  Foundation Grant  (DMS 1501059) as well as by a von Neumann Fellowship at the IAS  funded by the Fund for Mathematics and the Friends of the Institute for Advanced Study.}
\thanks{AHM was partially supported
  by a CRM-ISM Postdoctoral Fellowship and an AMS-Simons travel grant.}
\thanks{JS was partially supported by a National Security Agency Grant (H98230-15-1-0041), the North Dakota EPSCoR National Science Foundation Grant (IIA-1355466), the NDSU Advance FORWARD program sponsored by National Science Foundation grant (HRD-0811239),  and a grant from the Simons Foundation/SFARI (527204, JS)}
\address{Department of Mathematics, Cornell University, Ithaca, NY 14853 and School of Mathematics, Institute for Advanced Study, Princeton, NJ 08540}
\address{Department of Mathematics and Statistics, UMass, Amherst, MA,
  01003}
\address{Department of Mathematics, North Dakota State University, Fargo, ND 58102}
\email{karola@math.cornell.edu, ahmorales@math.umass.edu, jessica.striker@ndsu.edu}
\title{On flow polytopes, order polytopes, and certain faces of the alternating sign matrix polytope}
\begin{document}

\begin{abstract}
In this paper we study an alternating sign matrix analogue of the Chan-Robbins-Yuen polytope, which we call the ASM-CRY polytope. We show that this polytope has Catalan many vertices and its volume is equal to the number of standard Young tableaux of staircase shape; we also determine its Ehrhart polynomial. We achieve the previous by proving that the members of a family of faces of the alternating sign matrix polytope which includes ASM-CRY are both order and flow polytopes. Inspired by the above results, we relate three established triangulations of order and flow polytopes, namely Stanley's triangulation of order polytopes, the Postnikov-Stanley triangulation of flow polytopes and the Danilov-Karzanov-Koshevoy triangulation of
flow polytopes.  We show that when a graph $G$ is a planar graph, in which case the flow polytope $\F_G$ is also an order polytope,  Stanley's triangulation of this order polytope is one of the Danilov-Karzanov-Koshevoy triangulations of $\F_G$. Moreover, for a general graph $G$ we  show that the set of Danilov-Karzanov-Koshevoy triangulations of $\F_G$ equal the set of framed Postnikov-Stanley triangulations of $\F_G$. We also describe explicit bijections between the combinatorial objects labeling the simplices in  the above triangulations.
\end{abstract}

 \maketitle

\setcounter{tocdepth}{1}
\tableofcontents 

\section{Introduction}
\label{sec:intro}

In this paper we study a family of faces of the alternating sign
matrix polytope inspired by an intriguing face of the Birkhoff
polytope: the Chan-Robbins-Yuen (CRY) polytope~\cite{CRY}. We call
these faces the ASM-CRY family of polytopes. Interest in the
CRY polytope centers around its volume formula as a product of
consecutive Catalan numbers; this has been proved~\cite{Z} via an
identity equivalent to the Selberg integral, but the problem of finding a \emph{combinatorial proof} remains open. We prove that the polytopes in the ASM-CRY family are \emph{order polytopes} and use Stanley's theory of order polytopes~\cite{Stop} to give a \emph{combinatorial proof} of
formulas for their volumes and Ehrhart polynomials. We also show that
these polytopes, and all order polytopes of strongly planar posets,
are \emph{flow polytopes}. of planar graphs (Theorem~\ref{o-f}). The
converse of this statement is due to Postnikov \cite{P13}
(private communication) and we here include a proof
(Theorem~\ref{f-o}). These  observations bring us to the general  question of relating the different known triangulations of  flow and order  polytopes. We show that when  $G$ is a planar graph,
in which case  the flow polytope of $G$ is also an order polytope, then Stanley's canonical triangulation of this order polytope \cite{Stop} is one of the \emph{Danilov-Karzanov-Koshevoy triangulations} of the flow polytope of $G$  \cite{kosh}, a statement first observed by Postnikov \cite{P13}. Moreover, for general $G$ we  show that the set of Danilov-Karzanov-Koshevoy triangulations of the flow polytope of $G$   equals  the set of \emph{framed Postnikov-Stanley triangulations} of the flow polytope of $G$ \cite{P13,S}. We also describe explicit bijections between the combinatorial objects labeling the simplices in  the above triangulations, answering a question posed by Postnikov \cite{P13}.

\medskip

We  highlight the main results of the paper in the following theorems.  While we define some of the notation here, some only  appears in later sections to which we give pointers after the relevant statements.

\medskip

In Definition~\ref{faces}, we define the \textit{ASM-CRY family} $\F(ASM)(n)$ of
polytopes $\mathcal{P}_{\lambda}(n)$ indexed by partitions
$\lambda\subseteq \delta_n$ where $\delta_n:=(n-1,n-2,\ldots,1)$. In
Theorem~\ref{prop:asmfaces}, we prove that the polytopes in this family are \textit{faces of the
  alternating sign matrix polytope} $\mathcal{A}(n)$ defined in \cite{BK,StrASM}.
In the case when $\lambda=\varnothing$  we obtain an analogue of the Chan-Robbins-Yuen (CRY)
polytope, which we call the ASM-CRY polytope, denoted by $\ASM$. This
polytope contains the CRY polytope.
Our main theorem about the family of polytopes $\F(ASM)(n)$ is the following. For
the necessary definitions, see Sections~\ref{sec:orderisflow} and \ref{sec:cryasm}.

\begin{theorem} \label{thm:main}  
  The polytopes in the family $\F(ASM)(n)$ are integrally equivalent to
  flow and order polytopes. In particular, $\mathcal{P}_\lambda(n)$ is
  integrally equivalent to  the order polytope of the  poset $(\delta_n\setminus\lambda)^*$ and the flow polytope $\F_{G_{(\delta_n\setminus\lambda)^*}}$. 
\end{theorem}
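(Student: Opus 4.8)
The plan is to prove the two affine equivalences separately and then compose them: first that $\mathcal{P}_\lambda(n)$ is affinely equivalent to the order polytope $\mathcal{O}((\delta_n\setminus\lambda)^*)$, and then that this order polytope is affinely equivalent to the flow polytope $\F_{G_{(\delta_n\setminus\lambda)^*}}$. I expect the second equivalence to follow from a general statement, to be established in Section~\ref{sec:orderisflow}, that the order polytope of any strongly planar poset $P$ is affinely equivalent to the flow polytope $\F_{G_P}$ of an associated planar graph. Since the skew shape $\delta_n\setminus\lambda$, viewed as a poset of cells, is strongly planar, applying the general result to $P=(\delta_n\setminus\lambda)^*$ yields the flow-polytope half of the theorem essentially for free, so the real content is the first equivalence.

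For the order-polytope equivalence, I would start from the partial-sum (facet) description of the alternating sign matrix polytope $\mathcal{A}(n)$: a matrix $A=(a_{ij})$ lies in $\mathcal{A}(n)$ exactly when all of its partial column sums $\sum_{i'\le i}a_{i'j}$ and partial row sums $\sum_{j'\le j}a_{ij'}$ lie in $[0,1]$ and all full row and column sums equal $1$. The face $\mathcal{P}_\lambda(n)$ is cut out by saturating a specific collection of these inequalities, forcing the matrix entries to vanish outside the staircase region indexed by $\delta_n\setminus\lambda$; I would first make this saturation explicit using Definition~\ref{faces} and Theorem~\ref{prop:asmfaces}. The key move is then the linear change of coordinates sending $A$ to the array of its partial (corner) sums indexed by the cells of $\delta_n\setminus\lambda$. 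Because this transformation is invertible by differencing (inclusion-exclusion), it is an affine isomorphism onto its image, and I would check that the image is cut out precisely by (i) the box constraints placing each surviving corner-sum coordinate in $[0,1]$, and (ii) the monotonicity relations between corner sums of adjacent cells, which I claim are exactly the order relations encoded by the covering relations of the poset $(\delta_n\setminus\lambda)^*$.

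The crux, and the step I expect to be the main obstacle, is the bookkeeping in (ii): verifying that the partial-sum inequalities which remain unsaturated on the face correspond bijectively to the cover relations of the skew-shape poset, with neither spurious nor missing relations, and that the dualization indicated by the $*$ is the correct orientation. This requires matching the two-dimensional adjacency structure of the cells of $\delta_n\setminus\lambda$ against the row- and column-partial-sum inequalities, and confirming that saturating the inequalities outside the staircase does not impose hidden relations among the interior coordinates. Once this dictionary is set up, the defining inequalities of $\mathcal{P}_\lambda(n)$ in the new coordinates become literally the defining inequalities $0\le f(x)\le 1$ and $f(x)\le f(y)$ for $x\le y$ of the order polytope $\mathcal{O}((\delta_n\setminus\lambda)^*)$.

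Finally, I would assemble the result by composing the affine isomorphism $\mathcal{P}_\lambda(n)\cong\mathcal{O}((\delta_n\setminus\lambda)^*)$ with the affine isomorphism $\mathcal{O}((\delta_n\setminus\lambda)^*)\cong\F_{G_{(\delta_n\setminus\lambda)^*}}$ coming from the general order-is-flow result applied to the strongly planar poset $(\delta_n\setminus\lambda)^*$, which establishes both assertions of the theorem simultaneously.
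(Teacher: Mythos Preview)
Your proposal is correct and follows essentially the same route as the paper: the paper defines the affine map $\Phi$ sending a matrix $m\in\mathcal{P}_\lambda(n)$ to the function $g_m(i,j)=1-c_{i,j}$, where $c_{i,j}=\sum_{i'\le i,\,j'\ge j}m_{i',j'}$ is the northeast corner sum, checks in two lemmas that the image lies in $\mathcal{O}((\delta_n\setminus\lambda)^*)$, and observes that the homogeneous part of $\Phi$ is unitriangular so the map is a lattice-preserving affine isomorphism; the flow-polytope half is then exactly your appeal to Theorem~\ref{o-f} for the strongly planar poset $(\delta_n\setminus\lambda)^*$. The only refinement worth noting is that the paper does not carry out your proposed bijection between unsaturated partial-sum inequalities and cover relations; it instead verifies directly that $0\le c_{i,j}\le 1$ and that adjacent corner sums are monotone, which is a slightly lighter bookkeeping task than the one you anticipate.
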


By Stanley's theory of order polytopes \cite{Stop} it follows that the
volume of the polytope $P_{\lambda}(n)$ for any $P_{\lambda}(n)\in
\F(ASM)(n)$ is given by the number of {\em linear extensions}  of the
poset $(\delta_n \setminus \lambda)^*$ (which equals the number of \emph{standard Young tableaux}
of skew shape $\delta_n/\lambda)$, and its Ehrhart polynomial in the
variable $t$ is
given by the {\em order polynomial} of the poset $(\delta_n \setminus \lambda)^*$. See Corollary~\ref{cor:ASMcor} for the general statement. We give the application to $\ASM$ in the corollary below. For further examples of polytopes in $\F(ASM)(n)$, see Figure~\ref{fig:family}.

\begin{corollary} \label{cor1}
$\ASM$ is integrally equivalent to the order polytope of the  poset $\delta_n^*$.
Thus, $\ASM$ has  $\Cat(n) = \frac{1}{n+1}\binom{2n}{n}$
vertices, its normalized volume is given by  \[\vol(\ASM) = \#
SYT(\delta_n),\] and its Ehrhart polynomial is 
\begin{equation} \label{eq:proctor}
L_{\ASM}(t) = \Omega_{\delta_n^*}(t+1) =
\prod_{1\leq i<j\leq n} \frac{2t+i+j-1}{i+j-1}.
\end{equation}
\end{corollary}

Also, since the CRY polytope is contained in the ASM-CRY polytope then
the formulas above are upper bound for the volume and number of
lattice points of the former polytope (Corollary~\ref{cor:boundscry}).

In Theorems~\ref{f-o} and \ref{o-f}, we make explicit the relationship
between flow and order polytopes, showing that they correspond under
certain planarity conditions of the respective graph and poset. As an
application we obtain flow polytopes with volume equal to the number
of standard Young tableaux of any skew shape $\lambda/\mu$ (see Figure~\ref{fig:tab2flow}).

For the definitions of  $(\delta_n\setminus\lambda)^*$ and $G_{(\delta_n\setminus\lambda)^*}$, see Definition \ref{star} and the discussion before Theorem \ref{o-f}, respectively.

As mentioned earlier, a canonical triangulation of order polytopes was given by Stanley \cite{Stop}, and two families of triangulations of flow polytopes were constructed by Postnikov and Stanley \cite{P13,S} as well as Danilov, Karzanov and Koshevoy \cite{kosh}. It is natural to understand the relation among these triangulations, and we prove the following results, the first of which was first observed by Postnikov \cite{P13}.  For the necessary definitions, see Sections \ref{sec:planar} and \ref{sec:tri}.

\bt[Postnikov \cite{P13}] \label{thm2}
Given a planar graph $G$, the canonical triangulation of the order polytope $ \widehat{\O}(P_G)$ maps to the  Danilov-Karzanov-Koshevoy triangulation of $\F_G$ coming from the planar framing via the integral equivalence map $\phi: \widehat{\O}(P_G)\rightarrow \F_G$ given in Theorem \ref{o-f}. 
\et

\bt \label{thm3} Given a framed graph $G$, the set of Danilov-Karzanov-Koshevoy triangulations of the flow polytope $\F_G$ equals  the set of framed Postnikov-Stanley triangulations of  $\F_G$.
\et

All three of the above-mentioned triangulations are indexed by natural sets of combinatorial objects and we  give explicit  bijections between these sets in Sections \ref{sec:planar} and \ref{sec:tri}.

\medskip

\textbf{The outline of the paper is as follows.} In Section \ref{sec:foa}, we discuss the Birkhoff and alternating sign matrix polytopes, as well as some of their faces. In Sections \ref{sec:flow} and \ref{sec:proofsf-o-and-o-f} we give background information on flow and order polytopes and show that flow polytopes of planar graphs are order polytopes and that order polytopes of strongly planar posets are flow polytopes. In Section \ref{sec:cryasm} we study a family of faces of the alternating sign matrix polytopes and show that they are integrally equivalent to both flow and order polytopes and calculate their volumes and Ehrhart polynomials in particularly nice cases. In Section \ref{sec:planar}, we study triangulations of flow polytopes of planar graphs (which include the polytopes of Section \ref{sec:cryasm}) and show that their canonical triangulations  defined  by Stanley \cite{Stop} are  also Danilov-Karzanov-Koshevoy triangulations \cite{kosh}. Finally, in Section \ref{sec:tri}, we study the Danilov-Karzanov-Koshevoy triangulations and the  framed Postnikov-Stanley triangulations   of  flow polytopes of an arbitrary graph. We show that these sets are equal. We also exhibit explicit bijections between the combinatorial objects indexing the various triangulations, answering a question raised by Postnikov \cite{P13}.

\section{Faces of the Birkhoff and alternating sign matrix polytopes}
\label{sec:foa}

In this section,  we explain the motivation for our study of certain faces
of the alternating sign matrix polytope.  We review some standard
facts of lattice point enumeration of integral polytopes \cite{BR},\cite[\S  4.6]{EC}. Given an integral polytope
$\mathcal{P}$, we denote by $\relvol(\mathcal{P})$ the volume of
$\mathcal{P}$ relative to its lattice and by $L_{\mathcal{P}}(t)$ the Ehrhart function
that counts the number of lattice points of the dilated polytope
$t\cdot \mathcal{P}$. A well known result of Ehrhart \cite{Eh} states
that if $\mathcal{P}$ is integral, then $L_{\mathcal{P}}(t)$ is a
polynomial of degree $\dim(\mathcal{P})$ with leading coefficient
$\relvol(\mathcal{P})$ (see \cite[Cor. 3.16]{BR}). The quantity
$\dim(\mathcal{P})!\cdot \relvol(\mathcal{P})$ is an integer (see
\cite[Cor. 3.17]{BR}) called the normalized volume that we denote by $\vol(\mathcal{P})$.

We say that two integral polytopes $\mathcal{P}$ in $\mathbb{R}^d$  and $\mathcal{Q}$ in $\mathbb{R}^m$ are
{\bf integrally equivalent}, which we denote by $\mathcal{P}
\intequiv \mathcal{Q}$, if there is an affine transformation
$\varphi:\mathbb{R}^d \to \mathbb{R}^m$ whose restriction to
$\mathcal{P}$ is a bijection $\varphi: \mathcal{P} \to \mathcal{Q}$
that preserves the lattice, i.e. $\varphi$ is a
 bijection between $\mathbb{Z}^d \cap \aff(\mathcal{P})$ and
 $\mathbb{Z}^m \cap \aff(\mathcal{Q})$ where $\aff(\cdot)$ denotes
the affine span. The map $\varphi$ is then an {\bf integral equivalence}. Note that integrally equivalent polytopes have the
same Ehrhart polynomials and therefore they have the same volume.   We remark that isomorphism and  unimodular equivalence are other terms sometimes used in the literature for what we will refer to as integral equivalence. 

Next we define the Birkhoff and
Chan-Robbins-Yuen polytopes; we then define the alternating sign
matrix counterparts.

\begin{definition}
The \textbf{Birkhoff polytope}, $\mathcal{B}(n)$, is defined as
\[\mathcal{B}(n) := {\Big \{} (b_{ij})_{i,j=1}^n \in \mathbb{R}^{n^2}
\mid b_{ij}\geq 0, \,\,\, { \sum_{i}} b_{ij} = 1, \,\,\, \sum_{j}
b_{ij} =1 {\Big \}}.\]
\end{definition}

Matrices in $\mathcal{B}(n)$ are called \textbf{doubly-stochastic matrices}. A well-known theorem of Birkhoff \cite{Birkhoff} and von Neumann \cite{VonNeumann} states that $\mathcal{B}(n)$, as defined above, equals the convex hull of the $n\times n$ permutation matrices. Note that $\mathcal{B}(n)$ has $n^2$ facets and dimension $(n-1)^2$, its vertices are the
permutation matrices, and its volume has been calculated up to $n=10$ by Beck and Pixton \cite{BP}.  
De Loera, Liu and Yoshida \cite{LLY} gave  a closed summation formula for the volume of $\mathcal{B}(n)$, which, while of interest on its own right, does not lend itself to easy computation. Shortly after, Canfield and McKay \cite{CM} gave an asymptotic formula for the volume. 

A special face of the Birkhoff polytope, first studied by Chan-Robbins-Yuen \cite{CRY}, is as follows.

\begin{definition} \rm
The \textbf{Chan-Robbins-Yuen polytope}, $\mathcal{CRY}(n)$, is defined as \[\mathcal{CRY}(n) := \left\{ (b_{{i}{j}})_{i,j=1}^n \in \mathcal{B}(n) \mid  b_{{i}{j}}=0 \mbox{ for }  i-j \geq 2\right\}.\]
\end{definition}

$\mathcal{CRY}(n)$ has dimension $\binom{n}{2}$ and $2^{n-1}$
vertices. This polytope was introduced by Chan-Robbins-Yuen \cite{CRY}
and in \cite{Z} Zeilberger calculated its normalized volume as the following product of \emph{Catalan numbers}.

\begin{theorem}[Zeilberger \cite{Z}] \label{thm:volCRY}
\[
\vol(\mathcal{CRY}(n)) = \prod_{i=1}^{n-2} \Cat(i)
\]
where $\Cat(i) = \frac{1}{i+1}\binom{2i}{i}$.
\end{theorem}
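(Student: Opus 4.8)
The plan is to reduce the computation of the normalized volume to a multidimensional constant-term evaluation and then to invoke the Morris constant term identity (a reformulation of the Selberg integral); this is in spirit Zeilberger's original approach. The first move is to realize $\CRY(n)$ as a flow polytope. The support condition $b_{ij}=0$ for $i-j\ge 2$ leaves free exactly the $\binom{n+1}{2}$ entries on or above the first subdiagonal, and the row- and column-sum equations become flow-conservation constraints. This identifies $\CRY(n)$, up to a lattice-preserving change of coordinates, with the flow polytope $\F_{K_{n+1}}$ of the complete graph on vertices $1,\dots,n+1$ (edges oriented from smaller to larger) carrying netflow $(1,0,\dots,0,-1)$. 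A dimension count confirms the match: a connected graph $G$ has $\dim\F_G=\#E(G)-\#V(G)+1$, and here $\binom{n+1}{2}-(n+1)+1=\binom{n}{2}=\dim\CRY(n)$.

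Next I would express this volume as a constant term. By the Postnikov--Stanley/Lidskii volume formula, $\vol\big(\F_{K_{n+1}}(1,0,\dots,0,-1)\big)$ equals a single value $K_{K_{n+1}}(v)$ of the Kostant partition function of the complete graph, since for this netflow only one term of the Lidskii expansion survives. The Kostant partition function of type $A$ has generating function $\prod_{1\le i<j\le n+1}(1-x_i/x_j)^{-1}$, so $\vol(\CRY(n))$ is the constant term of this product against a suitable monomial $x^{-v}$. Equivalently, one can carry out the volume integral directly over the free coordinates and arrive at the same expression. After clearing denominators, this constant term is exactly an instance of the Morris identity.

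The decisive step is the evaluation of this constant term via Morris's identity; afterward I would simplify the resulting ratio of Gamma (equivalently, factorial) values and verify that it telescopes to $\prod_{i=1}^{n-2}\Cat(i)$. The hard part is precisely this analytic evaluation: Morris's identity, and the Selberg integral behind it, is a genuinely non-combinatorial input, and the absence of a bijective proof of it is exactly why a \emph{combinatorial} proof of the volume formula remains open --- the very gap that motivates the present paper's search for order- and flow-polytope interpretations of related faces.
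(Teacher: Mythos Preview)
Your proposal is correct and follows essentially the same approach that the paper attributes to Zeilberger: identify $\CRY(n)$ with the flow polytope $\F_{K_{n+1}}$, apply the Postnikov--Stanley volume formula (Theorem~\ref{ps}) to write $\vol(\CRY(n))$ as a single Kostant partition function value, and evaluate that value via the Morris constant term identity. The paper does not supply its own proof of this theorem; it only cites Zeilberger and summarizes exactly this strategy, noting that no combinatorial proof is known. One small quibble: the phrase ``only one term of the Lidskii expansion survives'' is unnecessary here, since Theorem~\ref{ps} already expresses the volume as a single value $K_G(0,d_2,\dots,d_{n-1},-\sum d_i)$ with no further reduction needed.
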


The proof in \cite{Z} used a relation (see Theorem~\ref{ps}) expressing
the volume as a value of the {\em Kostant
  partition function} (see Definition~\ref{def:KPF})  and a reformulation of the Morris
constant term identity \cite{WM} to calculate
this value. No combinatorial proof is known.

Next we give an analogue of the Birkhoff polytope in terms of
alternating sign matrices. Recall that \textbf{alternating sign matrices} (ASMs) \cite{ASM} are square matrices with the following properties:
\begin{itemize}
\item
entries $\in \{0,1,-1\}$,
\item
the entries in each row/column sum to 1, and
\item
the nonzero entries along each row/column alternate in sign.
\end{itemize}

The ASMs with no negative entries are the permutation matrices. See Figure \ref{fig:asm} for an example.

\begin{figure}[htbp]
\begin{center}
$\left( 
\begin{array}{rrr}
1 & 0 & 0 \\
0 & 1 & 0\\
0 & 0 & 1
\end{array} \right)
\left( 
\begin{array}{rrr}
1 & 0 & 0 \\
0 & 0 & 1\\
0 & 1 & 0
\end{array} \right)
\left( 
\begin{array}{rrr}
0 & 1 & 0 \\
1 & 0 & 0\\
0 & 0 & 1
\end{array} \right)
\left( 
\begin{array}{rrr}
0 & 1 & 0 \\
1 & -1 & 1\\
0 & 1 & 0
\end{array} \right)
\left( 
\begin{array}{rrr}
0 & 1 & 0 \\
0 & 0 & 1\\
1 & 0 & 0
\end{array} \right)
\left( 
\begin{array}{rrr}
0 & 0 & 1 \\
1 & 0 & 0\\
0 & 1 & 0
\end{array} \right)
\left( 
\begin{array}{rrr}
0 & 0 & 1 \\
0 & 1 & 0\\
1 & 0 & 0
\end{array} \right)$
\end{center}
\caption{All the $3\times 3$ alternating sign matrices.}
\label{fig:asm}
\end{figure}

\bd[Behrend-Knight \cite{BK}, Striker \cite{StrASM}] \label{def:asm}
The \textbf{alternating sign matrix polytope}, $\mathcal{A}(n)$, is defined as follows:
\[\mathcal{A}(n) :={\Big \{} (a_{ij})_{i,j=1}^n \in \mathbb{R}^{n^2} \mid 0\leq { \displaystyle\sum_{i=1}^{i'} a_{ij}\leq 1, \,\, 0\leq \displaystyle\sum_{j=1}^{j'} a_{ij}\leq 1, \,\,
\displaystyle\sum_{i=1}^n} a_{ij} = 1, \,\, \displaystyle\sum_{j=1}^n
a_{ij} =1 {\Big \}}, \]
where we have the first sum for any $1\leq i', j\leq n$, the second sum for any $1\leq j', i\leq n$,  the third sum for any $1\leq  j\leq n$, and the fourth sum for any $1\leq i\leq n$.
\ed

Behrend and Knight~\cite{BK}, and independently Striker~\cite{StrASM}, defined $\mathcal{A}(n)$.
The alternating sign matrix polytope can be seen as an analogue of the
Birkhoff polytope, since the former is the convex hull of all
alternating sign matrices (which include all permutation matrices)
while the latter is the convex hull of all permutation matrices. The
polytope $\mathcal{A}(n)$ has $4((n-2)^2+1)$ facets (for $n\geq 3$)~\cite{StrASM}, its dimension
is $(n-1)^2$, and its vertices are the $n\times n$ alternating sign
matrices~\cite{BK, StrASM}. The Ehrhart polynomial has been calculated
up to $n=5$ \cite{BK}. Its normalized volume for $n=1,\ldots,5$ is
calculated to be 
\[
1, 1, 4, 1376, 201675688,
\]
and no asymptotic formula for its volume is known.

In analogy with $\CRY(n)$, we study a special face of the ASM polytope we call the {ASM-CRY polytope} 
(and show, in Theorem~\ref{prop:asmfaces}, it is indeed a face of $\mathcal{A}(n)$).

\begin{definition} \label{def:asmcry}
The \textbf{ASM-CRY polytope} is defined as follows.
\[\ASM := \left\{ (a_{{i}{j}})_{i,j=1}^n \in \mathcal{A}(n) \mid  a_{{i}{j}}=0 \mbox{ for } {i}-{j}\geq 2\right\}.\]
\end{definition}

Since the $\CRY(n)$ polytope has a nice product formula for its
normalized volume, it is then natural to wonder if the volume of the
alternating sign matrix analogue of $\CRY(n)$, which we denote by
$\ASM$,  is similarly nice.  In Theorem \ref{thm:main} and Corollary~\ref{cor1}, we show that
$\ASM$ is both a flow and order polytope, and using the theory
established for the latter, we give the volume formula and the Ehrhart
polynomial of $\ASM$. Just like in the  $\CRY(n)$ case, all formulas
obtained are combinatorial. Unlike in the $\CRY(n)$ case, all the
proofs involved are combinatorial. In Theorem~\ref{thm:main}, we extend these results to a
family of faces $\F(ASM)(n)$ of the ASM polytope, of which $\ASM$ is a
member; see Section~\ref{sec:cryasm}. 

\begin{figure}
\subfigure[]{
\includegraphics[height=4cm]{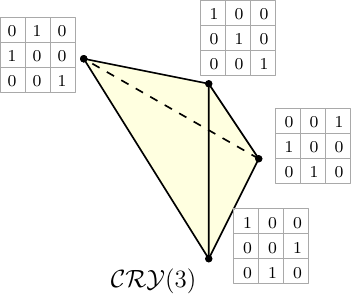}
}
\subfigure[]{
\includegraphics[height=4cm]{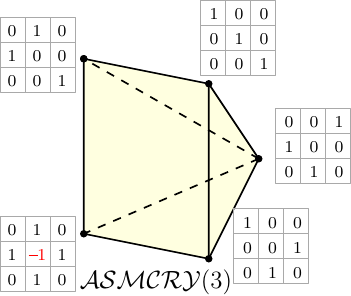}
}
\subfigure[]{
\includegraphics{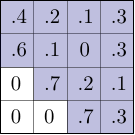}
}
\subfigure[]{
\includegraphics{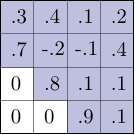}
}
\caption{(a) The
  polytope $\mathcal{CRY}(3)$ in $\mathbb{R}^3$, (b) the polytope
  $\mathcal{ASMCRY}(3)$ in $\mathbb{R}^3$, (c) a doubly-stochastic
  matrix in $\mathcal{CRY}(4)$, (d) a matrix in
  $\mathcal{ASMCRY}(4)$.}
\label{fig:intro}
\end{figure}

\section{Flow and order polytopes}
\label{sec:flow}
In order to state and prove Theorem~\ref{thm:main} in Section~\ref{sec:cryasm}, we need to discuss flow and order polytopes.
In Section~\ref{subsec:31}, we define flow and order polytopes and also explain how to see  $\CRY(n)$ as the flow polytope of the complete graph. In Sections~\ref{sec:flowisorder} and~\ref{sec:orderisflow}, we state  in Theorems \ref{f-o} and \ref{o-f} that the flow polytope of a planar graph is the order polytope of a related poset, and vice versa. We give the proofs of these theorems in Section \ref{sec:proofsf-o-and-o-f}.

\subsection{Background and definitions}
\label{subsec:31}

Let $G$ be a connected graph on the vertex set $[n]:=\{1,2, \ldots,
n\}$ with edges directed from the smaller to larger vertex. 
Denote by $\ini(e)$ the smaller (initial) vertex of edge $e$ and $\fin(e)$ the bigger (final) vertex of edge $e$.

\begin{definition} \label{def:flow}
Given a vector ${\bf a} = (a_1,a_2,\ldots,a_{n-1},-\sum_{i=1}^{n-1}
a_i)$ with $a_i \in \mathbb{Z}_{\geq 0}$, a \textbf{flow} $fl$ on $G$ with
{\bf netflow} ${\bf a}$ is a function $fl: E(G) \rightarrow \R_{\geq 0}$  such that 
 for $i=1,2,\ldots,n-1$ 
  $$\sum_{e \in E, \ini(e)=i}fl(e)  \,-\, \sum_{e \in E, \fin(e)=i}fl(e)= a_i$$
and 
$$\sum_{e \in E,  \fin(e)=n}fl(e) = \sum_{i=1}^{n-1} a_i.$$
\medskip
The \textbf{flow polytope} $\F_G({\bf a})$ associated to the graph $G$
and netflow vector ${\bf a}$ is the set of all flows $fl: E(G)
\rightarrow \R_{\geq 0}$ on $G$ with
netflow ${\bf a}$. We denote the set of integer flows of $\F_G({\bf
  a})$ by $\F_G^{\integer}({\bf a})$.
\end{definition}

\begin{definition}
A flow $fl$ of size one on $G$ is a flow on $G$ with netflow
$(1,0,\ldots,0,-1)$.  That is 
\[
\sum_{e \in E, \ini(e)=1} fl(e) \,=\, \sum_{e\in E, \fin(e)=i} fl(e) \,=\, 1,
\]
and for $2\leq i \leq n-1$
 $$\sum_{e \in E, \fin(e)=i}fl(e)\,=\, \sum_{e \in E, \ini(e)=i}fl(e).$$
  \medskip
  The \textbf{flow polytope} $\F_G$ associated to the graph $G$ is the set of all flows $fl: E(G) \rightarrow \R_{\geq 0}$ of size one on $G$. 
  \end{definition}

We assume that in our flow polytopes $\F_G$ each vertex $v \in \{2,3,\ldots,n-1\}$ in $G$  has both
incoming and outgoing edges.  
Note that this restriction  is not a serious
one. If there is a vertex  $v \in [2,n-1]$  with only incoming or
outgoing edges, then in $\F_G$ the flow on all these edges must be
zero, and thus, up to removing such vertices, any flow polytope $\F_G$
is integrally equivalent to a flow polytope defined as above.

The  polytope $\F_G$ is a convex polytope in the Euclidean space
  $\R^{\#E(G)}$ and its dimension is  $\dim(\F_G)=\#E(G)-\#V(G)+1$
  (e.g. see \cite{BV2}). The vertices of $\F_G$ are characterized as
  follows.

\begin{proposition}[{\cite[Cor. 3.1]{gallo}}] \label{prop:verticesflowpoly}
Let $G$ be a connected graph with vertices $[n]$ with edges oriented
from smaller to bigger vertices. Then the vertices of $\F_G$ are the
unit flows on maximal directed paths or {\bf routes} from the source $1$ to the sink $n$.
\end{proposition}

 Figure ~\ref{5} shows the equations of $\F_{K_5}$ and
  explains why this polytope is  integrally equivalent to $\CRY(4)$. The same
  correspondence shows that $\F_{K_{n+1}}$ and $\CRY(n)$ coincide. The
  following theorem connects volumes of flow polytopes and Kostant partition functions.  
  
\begin{theorem}[Postnikov-Stanley \cite{P13,S}, Baldoni-Vergne \cite{BV2}] \label{ps}
For a loopless graph $G$ on the   vertex set  $\{1,2\ldots,n\}$, with $d_i=indeg_i(G)-1$,
\[
\vol\left(\F_{G} \right)=K_{G}(0,d_2, \ldots, d_{n-1},
-\sum_{i=2}^{n-1} d_i),
\]  
where $K_{G}({\bf a})$ is the {Kostant partition function}, $indeg_i(G)$ denotes the indegree of vertex $i$ in $G$  and $\vol$ is normalized volume.
\end{theorem}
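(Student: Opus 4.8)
The plan is to compute $\vol(\F_G)$ through Ehrhart theory and then re-express the resulting leading coefficient as a single evaluation of the Kostant partition function. First I would identify the lattice points of the $t$-th dilate $t\F_G$ with the integer flows on $G$ of size $t$. Each edge $(i,j)\in E(G)$ carries the positive type-$A$ root $e_i-e_j$, and assigning a nonnegative integer multiplicity to every edge so that the resulting netflow is $(t,0,\ldots,0,-t)$ is, by definition, counted by $K_G(t,0,\ldots,0,-t)$. Since $\F_G$ is an integral polytope (its vertices are unit flows along routes, hence $0/1$ vectors), Ehrhart's theorem gives that
\[
L_{\F_G}(t)=K_G(t,0,\ldots,0,-t)
\]
is a genuine polynomial in $t$; its degree equals $\dim(\F_G)=\#E(G)-\#V(G)+1=:N$, and the normalized volume is $N!$ times its leading coefficient. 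That $L_{\F_G}$ is a polynomial rather than a quasi-polynomial reflects the total unimodularity of the network matrix of $G$.

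The substance of the theorem is that this leading coefficient is not merely an asymptotic quantity but equals a single honest lattice count, namely $K_G$ at the indegree vector $\mathbf{d}=(0,d_2,\ldots,d_{n-1},-\sum_{i=2}^{n-1}d_i)$. To see why such a collapse should occur, I would invoke the Lidskii-type reduction of Baldoni-Vergne (equivalently the residue/subdivision identity of Postnikov-Stanley). Writing $\F_G(\mathbf{a})$ for the flow polytope of flows with netflow $\mathbf{a}$, the map $\mathbf{a}\mapsto \vol(\F_G(\mathbf{a}))$ is homogeneous of degree $N$, and the Lidskii formula expands it as a finite sum, indexed by weak compositions $(j_1,\ldots,j_{n-1})$ of $N$, of terms of the shape $\binom{N}{j_1,\ldots,j_{n-1}}\,a_1^{j_1}\cdots a_{n-1}^{j_{n-1}}\,K_G$(a $\mathbf{j}$-dependent shift of the indegree data). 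Specializing to the size-one netflow $\mathbf{a}=(1,0,\ldots,0,-1)$, every monomial carrying a positive exponent on some $a_i$ with $2\le i\le n-1$ vanishes, so only the composition $(N,0,\ldots,0)$ survives; its multinomial coefficient is $1$ and its $K_G$-argument is exactly $\mathbf{d}$ (the source coordinate being $0$ since vertex $1$ has no incoming edges, and the sink absorbing the sum). This is the mechanism that simultaneously produces the normalized-volume interpretation and the indegree evaluation.

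The main obstacle is establishing the Lidskii/residue reduction itself, i.e. proving that the top-degree (in $\mathbf{a}$) behavior of the piecewise-polynomial $K_G(\mathbf{a})$ is governed by these shifted lattice counts. I would prove this by passing to the generating function
\[
\sum_{\mathbf{a}}K_G(\mathbf{a})\,x^{\mathbf{a}}=\prod_{(i,j)\in E(G)}\frac{1}{1-x_i x_j^{-1}},
\]
extracting $K_G(\mathbf{a})$ as an iterated constant term (a Jeffrey-Kirwan residue) and comparing its leading asymptotics in $\mathbf{a}$ with the analogous continuous residue computing $\vol(\F_G(\mathbf{a}))$; the bookkeeping of which residues contribute is precisely where the indegree shifts arise, and total unimodularity guarantees that the discrete and continuous residues agree up to the predicted combinatorial factor. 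An alternative, more combinatorial route I would also consider is the Postnikov-Stanley subdivision obtained by repeatedly reducing an interior vertex, which writes $\F_G$ as a signed union of products of simplices whose normalized volumes sum to $K_G(\mathbf{d})$; there the technical heart is verifying that the local reductions are volume-preserving and terminate.
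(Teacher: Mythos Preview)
The paper does not itself prove Theorem~\ref{ps}; it is quoted as a known result of Postnikov--Stanley and Baldoni--Vergne and used as a tool. So there is no ``paper's own proof'' to compare against in the strict sense. That said, Section~\ref{sec:tri} of the paper does sketch the combinatorial mechanism behind the Postnikov--Stanley proof: iterate the Subdivision Lemma at vertices $2,3,\ldots,n-1$, land on unit simplices indexed by tuples of noncrossing bipartite trees $(T_2,\ldots,T_{n-1})$, and identify such tuples with integer flows on $G$ of netflow $(0,d_2,\ldots,d_{n-1},-\sum d_i)$. This is exactly your ``alternative, more combinatorial route,'' so that part of your proposal lines up with what the paper does discuss.

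Your primary route---specialize the Lidskii volume polynomial of Baldoni--Vergne at $\mathbf{a}=(1,0,\ldots,0,-1)$ so that only the composition $(N,0,\ldots,0)$ survives---is also a correct derivation, and you are honest that the heavy lifting is in proving the Lidskii formula itself. Two small comments. First, the initial Ehrhart step is a detour: you never use the identification $L_{\F_G}(t)=K_G(t,0,\ldots,0,-t)$ once you invoke Lidskii, since the Lidskii formula computes $\vol(\F_G(\mathbf{a}))$ directly as a polynomial in $\mathbf{a}$. Second, your residue/generating-function sketch for proving Lidskii is plausible but genuinely hard to make rigorous on its own; if you want a self-contained argument, the subdivision route (your alternative, and the paper's) is much more elementary and would be the one to flesh out.
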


Recall the definition of the Kostant partition function.

\begin{definition} \label{def:KPF} The  \textbf{Kostant partition function}   $K_G(\v)$ is
the number of ways to write the vector $\v$ as a nonnegative linear
combination of the positive type $A_{n-1}$ roots corresponding to the
edges of $G$, without regard to order. The edge  $(i, j)$, $i<j$,
of $G$ corresponds  to the vector $e_i-e_j$, where $e_i$ is the
$i^{th}$ standard basis vector in $\mathbb{R}^{n}$.  
\end{definition}

It is easy to see by definition that the number of integer flows on
$G$ with netflow ${\bf a}$, that is, the size of $\F_G^{\integer}({\bf
  a})$ or number of integer points in the flow polytope $\F_G({\bf a})$, equals  $K_G({\bf a})$. In particular, 
the Ehrhart polynomial of $\F_G$ in variable $t$ is equal to $K_G(t, 0, \ldots,0, -t).$

\begin{figure}
\begin{center}
\includegraphics[scale=.7]{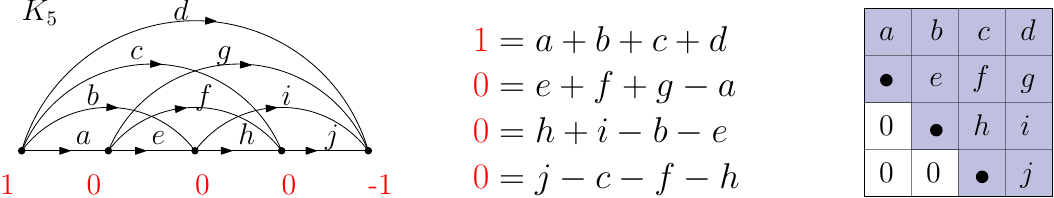}
 \caption{Graph $K_5$ with edges directed from smaller to bigger
   vertex. The flow variables on the edges are $a, b, c, d, e, f, g,
   h, i, j$, the net flows in the vertices are $1,0,0,0,-1$. The equations defining the flow polytope corresponding to $K_5$ are in the middle. Note that these same equations define $\CRY(4)$ as can be seen from the matrix on the left, where we denoted by $\bullet$ entries that are determined by the variables  $a, b,\ldots,j$. }
 \label{5}
 \end{center}
\end{figure}

\medskip

Now we are ready to define order polytopes and relate them to flow polytopes.

\begin{definition}[Stanley \cite{Stop}]
The \textbf{order polytope}, $\mathcal{O}(P)$, of a poset $P$ with
elements $\{t_1,t_2,\ldots,t_n\}$  is the set of points
$(x_1,x_2,\ldots,x_n)$ in $\mathbb{R}^n$ with $0\leq x_i\leq 1$ and if
$t_i \leq_P t_j$ then $x_i \leq x_j$. We identify each point
$(x_1,x_2,\ldots,x_n)$ of  $\mathcal{O}(P)$ with the function $f:P\to
\mathbb{R}$ with $f(t_i)=x_i$. 
\end{definition}

In our proofs, we will often use the polytope $\hat{\mathcal{O}}(P)$,
which is 
integrally equivalent to $\mathcal{O}(P)$ \cite[Sec. 1]{Stop}:

\begin{definition}[Stanley \cite{Stop}] \label{def:stan}
 Let $\widehat{P}$ be the poset obtained from $P$ by adjoining a minimum element $\hat{0}$ and a maximum
element $\hat{1}$. Define a polytope $\widehat{\mathcal{O}}(P)$ to be the set of functions $g:\widehat{P}\to
\mathbb{R}$ satisfying $g(\hat{0}) = 0$, $g(\hat{1}) = 1$, and $g(x) \leq g(y)$ if $x\leq y$ in $\widehat{P}$.
\end{definition}

\begin{lemma}[Stanley \cite{Stop}] \label{lem:stan}
The map $\nu: \widehat{\mathcal{O}}(P) \to \mathcal{O}(P)$ given by  $(g(x))_{x\in\widehat{P}}\mapsto (g(x))_{x\in {P}}$ is an integral equivalence. \end{lemma}

In general, computing or finding a combinatorial interpretation for the volume of a polytope is a hard problem. Order polytopes are an especially nice class of polytopes whose volume has a combinatorial interpretation.

\begin{theorem}[Stanley \cite{Stop}] \label{thm:stop}
Given a poset $P$ we have that 
\begin{compactitem}
\item[(i)] the vertices of $\mathcal{O}(P)$  are in bijection with
  characteristic functions of complements of order ideals of $P$, 
\item[(ii)]  the normalized volume of $\mathcal{O}(P)$  is $e(P)$, where
  $e(P)$ is the number of linear extensions of $P$,
\item[(iii)] the Ehrhart polynomial $L_{\mathcal{O}(P)}(m)$ of
  $\mathcal{O}(P)$ equals the order polynomial $\Omega(P,m+1)$ of $P$.
\end{compactitem}
\end{theorem}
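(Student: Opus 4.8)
The plan is to prove all three parts through Stanley's canonical triangulation of $\mathcal{O}(P)$ into unimodular simplices indexed by the linear extensions of $P$. Throughout, write $n=|P|$ and fix the ground set $\{t_1,\ldots,t_n\}$, identifying a point of $\mathcal{O}(P)$ with an order-preserving function $f\colon P\to[0,1]$.

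For part (i), I would first observe that $\mathcal{O}(P)$ is cut out by the inequalities $0\le x_i$, $x_i\le 1$, and $x_i\le x_j$ whenever $t_i\lessdot_P t_j$ is a covering relation. The coefficient matrix of this system is of incidence type (each row is $\pm x_i\le c$ or $x_i-x_j\le 0$) and hence totally unimodular, so $\mathcal{O}(P)$ is an integral polytope and, since $0\le x_i\le 1$, every vertex has coordinates in $\{0,1\}$. A $0/1$ point $\chi_S$ is order-preserving if and only if its support $S=\{t_i: x_i=1\}$ is a filter (up-set) of $P$, equivalently its complement is an order ideal. As every $0/1$ point of a $0/1$ polytope is a vertex, the vertices are in bijection with the filters of $P$, hence with the order ideals of $P$ by complementation.

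For parts (ii) and (iii) I would introduce the canonical triangulation. Each linear extension is a bijection $\omega\colon P\to[n]$ with $s<_P t\Rightarrow \omega(s)<\omega(t)$, and to it I associate the simplex
$$\Delta_\omega=\{\,f\in\mathcal{O}(P): f(\omega^{-1}(1))\le f(\omega^{-1}(2))\le\cdots\le f(\omega^{-1}(n))\,\}.$$
Three facts must be checked: (a) the $\Delta_\omega$ cover $\mathcal{O}(P)$; (b) they have pairwise disjoint interiors; (c) each $\Delta_\omega$ is unimodular of Euclidean volume $1/n!$. For (a), given $f\in\mathcal{O}(P)$ one sorts its values and breaks ties so that the resulting total order refines $\le_P$, producing a linear extension $\omega$ with $f\in\Delta_\omega$. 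For (b), on the interior the values are strictly ordered, so the sorting order, and hence $\omega$, is forced. For (c), in the coordinates $y_k=f(\omega^{-1}(k))$ the simplex is exactly $\{0\le y_1\le\cdots\le y_n\le 1\}$, a unimodular image of the standard simplex of volume $1/n!$. Summing over all linear extensions yields Euclidean volume $e(P)/n!$, so the normalized volume is $e(P)$, giving (ii).

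For (iii), the lattice points of the $m$-th dilate $m\,\mathcal{O}(P)$ are precisely the order-preserving maps $f\colon P\to\{0,1,\ldots,m\}$; since $\{0,\ldots,m\}$ is a chain with $m+1$ elements, these are counted by the order polynomial $\Omega(P,m+1)$ by definition, so $L_{\mathcal{O}(P)}(m)=\Omega(P,m+1)$, and polynomiality in $m$ follows from the standard theory of order polynomials (or from the triangulation together with Ehrhart reciprocity). The main obstacle is verifying that the $\Delta_\omega$ genuinely triangulate $\mathcal{O}(P)$, most delicately that their relative interiors are disjoint and that they exhaust the polytope once tie-breaking on the boundary is handled; the covering and disjointness rest on the compatibility of the sorting permutation with $\le_P$, which is exactly the linear-extension condition. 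Once that compatibility is set up correctly, the unimodularity and the resulting volume and Ehrhart computations are routine.
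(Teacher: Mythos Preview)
Your proof is correct and follows Stanley's original argument. Note, however, that the paper does not actually prove this theorem: it is stated as a citation to Stanley's paper \cite{Stop} and used as a black box. The canonical triangulation you describe does appear later in the paper (Section~\ref{sec:planar}), but only as an ingredient in relating triangulations of order and flow polytopes, not as a proof of Theorem~\ref{thm:stop}. So there is nothing to compare against; your write-up simply supplies the standard proof that the paper omits.
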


\begin{definition}
Given a poset $P$ and a positive integer $m$, the \textbf{order polynomial} $\Omega(P,m)$ is the number of order preserving maps $\eta:P\rightarrow \left\{1,2,\ldots,m\right\}$.
\end{definition}

\subsection{Flow polytopes of planar graphs are order polytopes}
\label{sec:flowisorder}
The following theorem, which states that a flow polytope of a planar
graph is an order polytope, is a result communicated to us by
Postnikov \cite{P13}.  Given a connected
graph $G$ with the conventions of Section~\ref{subsec:31}, we say that $G$ is
{\bf planar} if it has a planar embedding so that if vertex $i$ is
in position $(x_i,y_i)$ then $x_i < x_j$ whenever $i<j$. We denote by $G^*$ the \textbf{truncated dual} graph of
$G$, which is the dual graph with the vertex corresponding to the
infinite face deleted. The orientation of the edges of $G$ induces an
orientation of the edges of $G^*$ (faces of $G$)
from lower to higher $y$-coordinates of the end points. This allows us to consider $G^*$ as the Hasse
diagram of a poset that we denote by $P_G$. See
Figure~\ref{fig:planar_inverse}. Note that by Euler's formula,
$\#P_G=\#E(G)-\#V(G)+1$ which equals $\dim(\mathcal{F}_G)$. Let
$\widehat{P}_G := P_G \sqcup \{\hat{0},\hat{1}\}$.

\begin{figure}
\begin{center}
\includegraphics[scale=0.8]{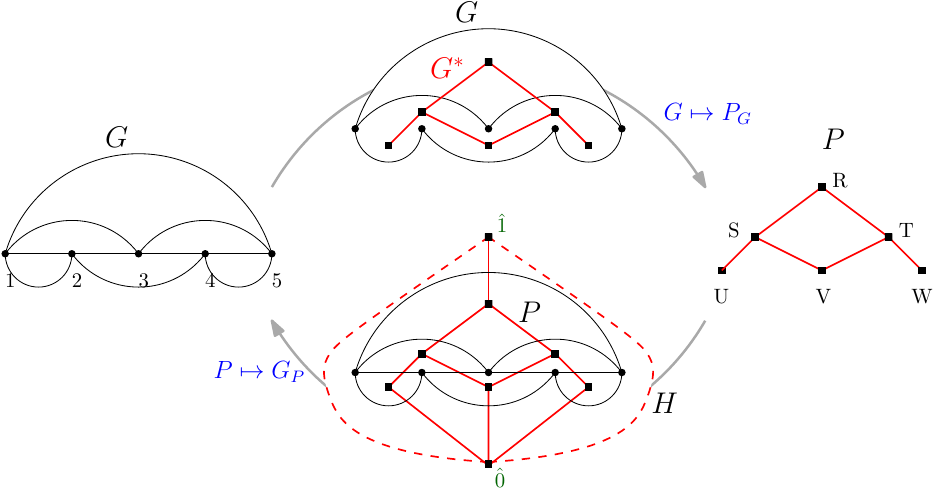}
\caption{Illustration of how to obtain the Hasse diagram $P_G$ from a
  planar graph $G$ (top arrow), and how to obtain a planar graph $G_P$
  from a strongly planar poset $P$ (bottom arrow).}
\label{fig:planar_inverse}
\end{center}
\end{figure}

\begin{theorem}[Postnikov \cite{P13}] \label{f-o} Let $G$ be a planar
  graph on the vertex set $[n]$ such that at each vertex $v \in
  [2,n-1]$ there are both incoming and outgoing edges. Fix a planar
  embedding of $G$ with the above conventions. Then the map $\varphi: \mathcal{F}_G \to \widehat{\mathcal{O}}(P_G)$  given in  Definition \ref{def:varphi} is an integral equivalence. In particular, $ \mathcal{F}_G \intequiv \widehat{\mathcal{O}}(P_G)\intequiv \mathcal{O}(P_G)$.
  \end{theorem}

\begin{definition} \label{def:varphi}
Define  $\varphi: \mathcal{F}_G \to \widehat{\mathcal{O}}(P_G)$ by  $fl \mapsto (f(x))_{x\in P_G}$, where $f: \widehat{P}_G \to
\mathbb{R}_{\geq 0}$ is given by \begin{equation} \label{def:eqflow}
f(x)=\sum_{e  \in \mathsf{p}} fl(e).
\end{equation}
The latter  sum is taken over the edges $e$
  that are intersected by a(ny) path $\mathsf{p}$ in $\widehat{P}_G$
  from $\hat{0}$ to $x$.   
  \end{definition}

\begin{figure}
\begin{center}
\includegraphics[scale=0.8]{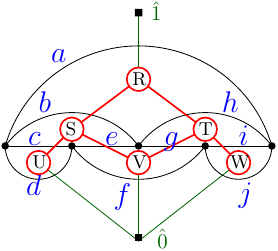}
\caption{Illustration of the maps $fl \mapsto f$ and $f \mapsto fl$  from Definitions \ref{def:varphi} and \ref{def:phi} explained in Examples \ref{ex:mapsflow2order1} and  \ref{ex:mapsflow2order2}.}
\label{fig:mapflow2order}
\end{center}
\end{figure}

\begin{example} \label{ex:mapsflow2order1}
Given the graph $G$ and the corresponding poset $P_G$ in
Figure~\ref{fig:mapflow2order}, the   map  $fl \mapsto f$ from Definition \ref{def:varphi} is as follows:
 
\begin{minipage}[c]{0.4\linewidth}
\begin{align*}
f(R) &= fl(b)+fl(c)+fl(d),\\
f(S) &= fl(c) + fl(d)\\
f(T) & = fl(i) + fl(j)\\
f(U) & = fl(d)\\
f(V) &= fl(f)\\
f(W) & = fl(j).
\end{align*}
\end{minipage}

\end{example}

 Lemma \ref{lem:map2op_well_defined} below 
 shows that $f$ in Definition \ref{def:varphi} is well-defined, while Lemma \ref{lem:map} shows that $\varphi$  indeed maps points in $\F_G$ to
points in $\widehat{\mathcal{O}}({P}_G)$. 
The proof of Theorem \ref{f-o} is given in  Section~\ref{sec:proofsf-o-and-o-f}.

\subsection{Order polytopes of strongly planar posets are flow polytopes}
\label{sec:orderisflow}
We now state a converse of  Theorem \ref{f-o},  showing that the order
polytope of a strongly  planar poset   is a flow polytope.   A poset
$P$ is   {\bf strongly planar} if the Hasse diagram of $\widehat{P}:=P
\sqcup \{\hat{0}, \hat{1}\}$ has a planar embedding with $y$
coordinates respecting the order of the poset. For example, the
``bowtie'' poset defined by the relations $a<c, a<d, b<c, b<d$ is planar, 
but not strongly planar. Given a strongly planar poset $P$, let $H$ be
the (planar) graph obtained from the Hasse diagram of $\widehat{P}$ with two additional edges from $\widehat{0}$ to $\widehat{1}$, one of which
goes to the left of all the poset elements and another to the
right. We can then define the graph $G_P$ to be the truncated dual of
$H$. 
The orientation of  $G_P$  is inherited from the poset in the
following way: if in the construction of the truncated dual, the edge
$e$ of $G_P$ crosses the edge $x\rightarrow y$ where $x<y$ in $P$,
then $y$ is on the left and $x$ is on the right as you traverse $e$. See
Figure~\ref{fig:planar_inverse}.

\begin{theorem}\label{o-f} If $P$ is a strongly planar poset, then
 the map $\phi: \widehat{\mathcal{O}}(P) \to \mathcal{F}_{G_P}$  given in  Definition \ref{def:phi} is an integral equivalence. In particular, $\mathcal{O}(P) \intequiv   \widehat{\mathcal{O}}(P)\intequiv   \mathcal{F}_{G_P}$.
\end{theorem}

\begin{definition} \label{def:phi}
Define  $\phi: \widehat{\mathcal{O}}(P) \to \mathcal{F}_{G_P}$ by  $(f(x))_{x\in \hat{P}} \mapsto fl$, where $fl: E(G_P) \to
\mathbb{R}_{\geq 0}$ is given by 
\begin{equation} \label{def:eq-order}
fl(e)=f(y)-f(x),
\end{equation}
where  $e$ crosses the
Hasse diagram edge $x\rightarrow y$ (in the dual construction). 
\end{definition}

\begin{example} \label{ex:mapsflow2order2}
Given the graph $G$ and the corresponding poset $P_G$ in
Figure~\ref{fig:mapflow2order}, the  map  $f \mapsto fl$ from Definition \ref{def:phi} is as follows:

\begin{minipage}[c]{0.25\linewidth}
\begin{align*}
fl(a) &= 1-f(R)\\
fl(b) &= f(R)  - f(S)\\
fl(c) & = f(S)-f(U)\\
fl(d) & = f(U)\\
fl(e) &= f(S)-f(V)\\
&\\
\end{align*}
\end{minipage}
\begin{minipage}[c]{0.25\linewidth}
\begin{align*}
fl(f) & = f(V)\\
fl(g) &= f(T)-f(V)\\
fl(h) &=f(R)-f(T)\\
fl(i) & =f(T)-f(W)\\
fl(j) &=f(W).\\
& \\
\end{align*}
\end{minipage}
\end{example}

We postpone the proof of Theorem \ref{o-f} result to Section \ref{sec:proofsf-o-and-o-f}.

\section{Proofs of Theorem~\ref{f-o} and Theorem~\ref{o-f}} \label{sec:proofsf-o-and-o-f}

This section provides the proofs of  Theorems~\ref{f-o} and ~\ref{o-f}.
 
\begin{lemma} \label{lem:map2op_well_defined}
Given a flow $fl \in \F_G$, the map $f:\widehat{P}_G \to \mathbb{R}_{\geq 0}$
is independent on the path $\mathsf{p}$ chosen in \eqref{def:eqflow}. 
\end{lemma}

\begin{proof}
Let $\mathsf{p}_1$ and $\mathsf{p}_2$ be two paths in $\widehat{P}_G$ from $\hat{0}$ to $x$. We show that 
\begin{equation} \label{pp}
\sum_{e \in \mathsf{p}_1} fl(e) = \sum_{e \in \mathsf{p}_2} fl(e).
\end{equation}
If $\mathsf{p}_1$ and $\mathsf{p}_2$ coincide, \eqref{pp}  is trivial.  We induct on    the number of
vertices of $G$ enclosed by the two paths $\mathsf{p}_1$ and $\mathsf{p}_2$.
Without loss of generality, assume $\mathsf{p}_1$
is left of $\mathsf{p}_2$ given the planar drawing of $G$.    Let $v$ be
the  vertex with the smallest $x$-coordinate enclosed by the two paths $\mathsf{p}_1$ and $\mathsf{p}_2$ in the planar drawing  of $G$. By construction, all
the incoming edges in $G$ to $v$ are crossed by path
$\mathsf{p}_1$ and $x$ is not a face between two incoming edges to
$v$. Next, we do the following local move to change the path
$\mathsf{p}_1$: let $\mathsf{p}'_1$ be the path that coincides with
$\mathsf{p}_1$ except that it crosses the outgoing edges of $v$ (see Figure~\ref{fig:localmovepaths}). By
conservation of flow on vertex $v$, the sum of the flow of the
incoming edges to $v$ equals the sum of the flow of the outgoing edges
from $v$. Since these are the only crossed edges that $\mathsf{p}_1$ and
$\mathsf{p}'_1$ differ on we have that 
\[
\sum_{e \in \mathsf{p}_1} fl(e) = \sum_{e \in \mathsf{p}'_1} fl(e).
\]
The paths $\mathsf{p}'_1$ and $\mathsf{p}_2$ have one fewer vertex of
$G$ enclosed by them than the paths $\mathsf{p}_1$ and $\mathsf{p}_2$. By
induction we have 
\[
 \sum_{e \in \mathsf{p}'_1} fl(e) = \sum_{e \in \mathsf{p}_2} fl(e).
\]
Comparing the latter two equations, the result follows.
\end{proof}

\begin{figure}
\includegraphics{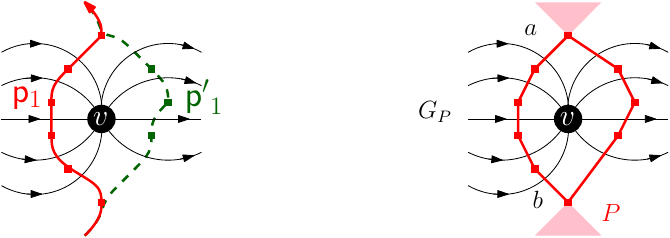}
\caption{Left: Local move of paths in the proof of
  Lemma~\ref{lem:map2op_well_defined}. Right: Illustration of why flow
is conserved in the map from $\widehat{\mathcal{O}}(P)$ to $\mathcal{F}_{G_P}$.}
\label{fig:localmovepaths}
\end{figure}

Next, we show that given a flow $fl$ in $\F_G$ the point $\varphi(fl)=(f(x))_{x\in P_G}$ is in $\widehat{\mathcal{O}}(P_G)$. 

\begin{lemma} \label{lem:map}
Given a flow $fl \in \F_G$, the image $\varphi(fl) \in \widehat{\mathcal{O}}(P_G)$.
\end{lemma}

\begin{proof} Note  that $f(\hat{0})=0$. 
 We have that $0\leq f(x)$ since $fl(e)\geq 0$ for all edges
$e$. Also, $f(x)\leq 1$ since the set of edges whose sum of flows
equals $f(x)$ can always be extended to a path from $\hat{0}$ to
$\hat{1}$. By repeated application of Lemma~\ref{lem:map2op_well_defined}, the total flow in such a path is $1$. Thus, $f(\hat{1})=1$.  Next, if $x'$ covers $x$ in $\widehat{P}_G$
then there is an edge $e'$ in $G$ separating the graph faces $x$ and
$x'$. Thus $f(x')=fl(e')+f(x)\geq f(x)$. Hence the linear map $f$ takes a point $(fl(e))_{e \in E(G)}$ of $\F_G$ to the
point $(f(x))_{x \in P_G}$ of  the order polytope $\widehat{\mathcal{O}}(P_G)$. 
\end{proof}

\begin{lemma} \label{lem:map1}
Given a point in $\widehat{\mathcal{O}}(P)$ viewed as a
function $f: \widehat{P} \to \mathbb{R}_{\geq 0}$, the flow $fl : E(G_P) \to
\mathbb{R}_{\geq 0}$ as in Definition \ref{def:phi} is in $\mathcal{F}_{G_P}$.
\end{lemma}

\begin{proof}
Let $f:\widehat{P}\to \mathbb{R}_{\geq 0}$ be a point in $\widehat{\mathcal{O}}(P)$ and
let $e$ be an edge in $G_P$ crossing the Hasse diagram edge $x
\rightarrow y$ of $\widehat{P}$. Since $x \leq_P y$ then by definition
of $\widehat{\mathcal{O}}(P)$, $fl(e)=f(y)-f(x)
\geq 0$. 

Next, we find the netflows at each vertex of $G$. Consider the
leftmost (rightmost) path in $\widehat{P}$ from $\hat{0}$ to $\hat{1}$. This path
crosses all the outgoing (incoming) edges in $G$ of vertex $1$ (vertex
$n$). We have that 
\[
\sum_{ e \in E, \ini(e)=1} fl(e) = \sum_{e \in E,  \fin(e)=n}fl(e)  = f(\hat{1})-f(\hat{0}) = 1-0 =1.
\]
For an internal vertex $v\in [2,n-1]$, let $a$ be the  face
bordering the highest incoming and outgoing edge to
$v$. Similarly, let $b$ be the  face
bordering the lowest incoming and outgoing edge to
$v$. Consider the paths $\mathsf{p}_{in}$ and $\mathsf{p}_{out}$ be
the paths in $\widehat{P}$ from $b$ to $a$ crossing the incoming and
outgoing edges to $v$ respectively (see
Figure~\ref{fig:localmovepaths}, Right). Then the total incoming and
outgoing flow to vertex $v$ are
\begin{align*}
\sum_{e \in E, \fin(e)=v}fl(e) & = \sum_{z \to w \text{ in }
  \mathsf{p}_{in}} (f(w)-f(z)) = f(a) - f(b),\\
\sum_{e \in E, \ini(e)=v}fl(e) & = \sum_{z \to w \text{ in }
  \mathsf{p}_{out}} (f(w)-f(z)) = f(a) - f(b),
\end{align*}
This shows the flow is conserved on vertex $v$, and thus $fl(\cdot)$
is in $\mathcal{F}_{G_P}$.
\end{proof}

Lemma \ref{lem:map1}  shows that $\phi(\widehat{\mathcal{O}}(P)) \subset \mathcal{F}_{G_P}$.

\begin{proof}[Proof of Theorem~\ref{f-o} and Theorem~\ref{o-f}]
Note that given a planar graph $G$ we have that $Q:=P_G$ is a strongly
planar poset and that $G_{Q} = G$. Given a flow $fl$  in
$\F_G$, let $f=\varphi(fl)$ the corresponding point in
$\widehat{O}(P_G)$ and $fl'$ be the flow $\phi(\varphi(fl))=\phi(f)$. Let $e$ be an edge of $G$ crossing the
Hasse diagram edge $x\to y$ in $\widehat{P}_G$
\begin{align*}
fl'(e) &= f(y) - f(x)\\
&= \sum_{e_1 \in \mathsf{p}} fl(e_1) - \sum_{e_2 \in
  \mathsf{q}} fl(e_2), 
\end{align*}
where $\mathsf{p}$ is a path $\widehat{P}_G$ from $\hat{0}$ to $y$ and
$\mathsf{q}$ is a path $\widehat{P}_G$ from $\hat{0}$ to $x$. By
Lemma~\ref{lem:map2op_well_defined} the value of $f(y)$ is independent
of the choice of path, so by letting $\mathsf{p} = \mathsf{q}+ x\to y$
the last difference becomes $fl(e)$, showing that $fl'(e)=fl(e)$. A similar argument shows that
$\varphi \circ \phi$ is the identity. Thus the maps $\phi$ and
$\varphi$ are inverses of each other and they both preserve integer points. Therefore, $\phi$ and
$\varphi$ are integral equivalences.   Using Lemma \ref{lem:stan} giving $\widehat{\mathcal{O}}(P)\intequiv  \mathcal{O}(P)$ for any poset $P$ we are done.
\end{proof}

\begin{remark}
By Theorem~\ref{f-o}, if $G$ is a planar graph then $\F_G$ is integrally
equivalent to an order polytope. This raises the question of whether
this relation holds for non-planar graphs: for instance for the
polytope $\CRY(n) \cong \F_{K_{n+1}}$ for $n\geq 4$. We
can use a similar construction to that in the proof of Theorem~\ref{f-o}  to show that
$\F_{K_5}$ and $\F_{K_6}$ are integrally equivalent to the order polytopes of the posets:
\begin{center}
\includegraphics[scale=1.2]{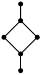}, \quad 
\includegraphics[scale=1.2]{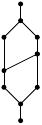}
\end{center}
We leave it as a question whether $\F_{K_7}$ (dimension $15$, $32$ vertices, volume $140$)  is an order polytope.
\end{remark}

\begin{figure}
\includegraphics[scale=0.8]{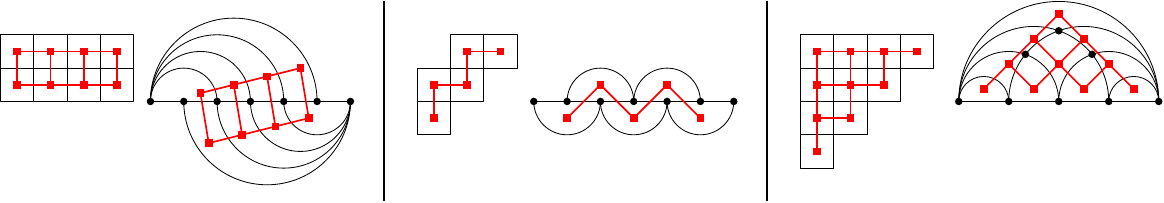}
\caption{Examples of Young diagrams, their associated planar posets $P$ and
  graphs $G_P$ such that the order polytope $\mathcal{O}(P)$ and
  $\F_{G_P}$ are integrally equivalent.}
\label{fig:tab2flow}
\end{figure}

\section{$\ASM$ and the family of polytopes $\F(ASM)$}
\label{sec:cryasm}

In this section, we introduce the ASM-CRY family of polytopes $\F(ASM)$, which includes
$\ASM$, and show that each of these polytopes is a face of the ASM polytope. We, furthermore, show that each polytope in this family is both  an order
and a flow polytope. Then, using the theory of order     polytopes as discussed in Section~\ref{subsec:31}, we determine their volumes and Ehrhart polynomials.

\bd \label{faces}
Let $\delta_n=(n-1, n-2, \ldots, 2, 1)$ be the staircase partition
considered as the positions $(i,j)$ of an $n\times n$ matrix given by
$\left\{(i,j) \mid {j}-{i}\geq 1\right\}$. Let the partition
$\lambda=(\lambda_1,\lambda_2,\ldots,\lambda_{k})\subseteq \delta_n$
denote matrix positions $\left\{(i,j) \mid 1\leq i \leq k, \,\,
  n-\lambda_i+1\leq j\leq n, \,\, \lambda_i\leq n-i\right\}$.

We define the {\bf ASM-CRY family}
\[\F(ASM)(n) := \left\{  \mathcal{P}_{\lambda}(n) \mid \lambda \subseteq\delta_n \right\},\]
where
\[ \mathcal{P}_{\lambda}(n):=\left\{(a_{{i}{j}})_{i,j=1}^n \in \mathcal{A}(n) \mid  a_{{i}{j}}=0 \mbox{ for } {i}-{j}\geq 2 \mbox{ and for } (i,j)\in\lambda \right \}.\]
\ed

Note that $\mathcal{P}_\varnothing(n)=\ASM$, as in Definition \ref{def:asmcry}. 
\medskip

In the following proposition we give a convex hull description of the polytopes in this family.
\begin{proposition} \label{prop:asmconv}
The polytope $\P_\lambda(n) \in \F(ASM)(n)$ is the convex hull of the $n\times n$ alternating sign matrices $(A_{ij})_{i,j=1}^n$ with $A_{{i}{j}}=0 \mbox{ for } {i}-{j}\geq 2 \mbox{ and for } (i,j)\in\lambda$.
\end{proposition}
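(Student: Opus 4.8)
The plan is to prove two inclusions between $\mathcal{P}_\lambda(n)$ and the polytope $Q:=\mathrm{conv}\{A=(A_{ij})\text{ an ASM} : A_{ij}=0 \text{ for } (i,j)\in S\}$, where $S:=\{(i,j):i-j\ge 2\}\cup\lambda$. The inclusion $Q\subseteq\mathcal{P}_\lambda(n)$ is immediate: each such ASM is a vertex of $\mathcal{A}(n)$ and satisfies the vanishing conditions defining $\mathcal{P}_\lambda(n)$, so it lies in $\mathcal{P}_\lambda(n)$, and convexity gives $Q\subseteq\mathcal{P}_\lambda(n)$. The real content is the reverse inclusion, and the main obstacle is that the single-entry condition ``$a_{ij}=0$'' is \emph{not} a face condition of $\mathcal{A}(n)$: an entry need not be sign-definite on $\mathcal{A}(n)$ even when $i-j\ge 2$ (for $n\ge 4$ a $-1$ can appear arbitrarily far below the diagonal), so one cannot argue entrywise that a vertex of $\mathcal{A}(n)$ inherits a zero of $M$.

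To circumvent this I would first reformulate the vanishing conditions in terms of partial sums, which are exactly the quantities bounded by the defining inequalities of $\mathcal{A}(n)$ in Definition~\ref{def:asm}. Writing $C_{i'j}=\sum_{i=1}^{i'}a_{ij}$ and $R_{ij'}=\sum_{j=1}^{j'}a_{ij}$, one has on $\mathcal{A}(n)$ that $0\le C_{i'j}\le 1$, $0\le R_{ij'}\le 1$, and $C_{nj}=R_{in}=1$. The key step is a pair of elementary equivalences, valid for every matrix in $\mathcal{A}(n)$: since in column $j$ the positions with $i-j\ge 2$ form the bottom segment $i\ge j+2$, we have $a_{ij}=0$ for all $i\ge j+2$ if and only if $C_{i'j}=1$ for all $i'\ge j+1$; and since $\lambda\subseteq\delta_n$ occupies in row $i$ precisely the last $\lambda_i$ entries, we have $a_{ij}=0$ for all $(i,j)\in\lambda$ if and only if $R_{ij'}=1$ for all $j'\ge n-\lambda_i$ and $1\le i\le k$. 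Each equivalence is a one-line telescoping computation using $C_{nj}=1$ (resp.\ $R_{in}=1$).

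With this reformulation the reverse inclusion follows by a standard extremality argument. Given $M\in\mathcal{P}_\lambda(n)$, I would write $M=\sum_k c_k A^{(k)}$ as a convex combination of vertices of $\mathcal{A}(n)$, i.e.\ ASMs, with $c_k>0$ and $\sum_k c_k=1$. The partial-sum functionals are linear and bounded above by $1$ on $\mathcal{A}(n)$, and by the equivalences every relevant partial sum of $M$ attains its maximal value $1$; from $1=\sum_k c_k\,C_{i'j}(A^{(k)})$ with each $C_{i'j}(A^{(k)})\le 1$ we conclude $C_{i'j}(A^{(k)})=1$ for every $k$, and similarly $R_{ij'}(A^{(k)})=1$. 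Applying the equivalences in reverse to each $A^{(k)}$ shows that every $A^{(k)}$ vanishes on all of $S$, so $A^{(k)}\in Q$ and hence $M\in Q$. This simultaneously exhibits $\mathcal{P}_\lambda(n)$ as the face of $\mathcal{A}(n)$ cut out by tightening the valid inequalities $C_{i'j}\le 1$ and $R_{ij'}\le 1$ at the relevant indices, recovering the face statement of Theorem~\ref{prop:asmfaces}.
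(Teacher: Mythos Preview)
Your proof is correct and in fact cleaner than the paper's. The paper establishes the nontrivial inclusion $\mathcal{P}_\lambda(n)\subseteq Q$ by a direct contradiction argument: writing $b\in\mathcal{P}_\lambda(n)$ as a convex combination of ASMs and supposing one of them, say $A^1$, has a nonzero entry at some forbidden position $(i,j)$, it uses $b_{ij}=0$ to find another $A^2$ with the opposite sign there, then invokes the alternating-sign condition to push a nonzero entry of $A^2$ strictly to the left in the same row (or strictly down in the same column, for the $\lambda$ case), where $b$ again vanishes; iterating this walk until it runs off the border of the matrix yields the contradiction. Your approach instead repackages the vanishing conditions as the \emph{tightness} of certain defining inequalities of $\mathcal{A}(n)$ (the column partial sums $C_{i'j}=1$ for $i'\ge j+1$ and the row partial sums $R_{ij'}=1$ for $j'\ge n-\lambda_i$), and then observes that any linear functional attaining its maximum on a convex combination must attain it on every extreme point appearing with positive weight. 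This extremality argument avoids the step-by-step chase entirely and, as you note, simultaneously identifies $\mathcal{P}_\lambda(n)$ as the face of $\mathcal{A}(n)$ obtained by saturating these inequalities --- something the paper proves separately (Theorem~\ref{prop:asmfaces}) via the elementary-flow-grid machinery of~\cite{StrASM}. The paper's route is more combinatorial and self-contained at the level of ASM entries; yours is shorter and makes the face structure transparent without appeal to flow grids.
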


\begin{proof}
Let $\mathcal{Q}_{\lambda}(n)$ denote the convex hull of the $n\times n$ alternating sign matrices $(A_{ij})_{i,j=1}^n$ with $A_{{i}{j}}=0 \mbox{ for } {i}-{j}\geq 2 \mbox{ and for } (i,j)\in\lambda$.
It is easy to see that $\mathcal{Q}_{\lambda}(n)$ is contained in $\mathcal{P}_{\lambda}(n)$, since matrices in both polytopes have the same prescribed zeros and satisfy the inequality description of the full ASM polytope $\mathcal{A}(n)$. 

It remains to prove that $\mathcal{P}_{\lambda}(n)$ is contained in
$\mathcal{Q}_{\lambda}(n)$. Suppose there exists a matrix
$b=(b_{ij})_{i,j=1}^n\in\P_{\lambda}(n)$ such that
$b\notin\mathcal{Q}_{\lambda}(n)$. We know that $b$ is in the convex
hull of all $n\times n$ ASMs. So $b=\mu_1 A^1 + \mu_2 A^2 + \cdots +
\mu_{k} A^k$, where $A^1,\ldots A^k$ are distinct $n\times n$ alternating sign matrices and
$\mu_1,\ldots,\mu_k > 0$.  At least one of these ASMs, say $A^1$ must
have a nonzero entry $A^1_{ij}$ for some $(i,j)$ satisfying either
${i}-{j}\geq 2 \mbox{ or  } (i,j)\in\lambda$. Suppose ${i}-{j}\geq 2$;
the argument follows similarly in the case $(i,j)\in\lambda$. Now
since $b_{ij}=0$ and $A^1_{ij}\neq 0$, there must be another ASM, say
$A^2$ such that $A^2_{ij}$ is nonzero of opposite sign. Say
$A^1_{ij}=1$ and $A^2_{ij}=-1$. Then by the definition of an
alternating sign matrix, there must be $j'<j$ such that
$A^2_{ij'}=1$. But $b_{ij'}=0$ as well, so there must be an $A^3$ with $A^3_{ij'}=-1$ and $j''<j'$ such that $A^3_{ij''}=1$. Eventually, we will reach the border of the matrix and reach a contradiction.  Thus, $\mathcal{P}_{\lambda}(n) = \mathcal{Q}_{\lambda}(n)$.
\end{proof}

We show in Theorem~\ref{prop:asmfaces} below that the polytopes in $\F(ASM)(n)$ are faces of $\mathcal{A}(n)$. First, we need some terminology from~\cite{StrASM}. 
Consider $n^2 + 4n$ vertices on a square grid: $n^2$
`internal' vertices $(i, j)$
and $4n$ `boundary' vertices $(i, 0)$, $(0, j)$, $(i, n + 1)$, and $(n + 1, j)$, where $1\leq i,j\leq n$. Fix the orientation of this grid so that the first coordinate increases from top to bottom and the second coordinate increases from left to right, as in a matrix.
The \textbf{complete flow grid} $C_n$ is defined as the directed graph on these vertices
with directed edges pointing in both directions between neighboring internal vertices within the grid, and also directed edges from internal vertices to neighboring border vertices. That is, $C_n$ has edge set $\{((i, j),(i, j \pm 1)),((i, j),(i \pm 1, j)) \mid i, j = 1,\ldots, n\}$.
A \textbf{simple flow grid} of order $n$ is a subgraph of $C_n$ consisting of all the
vertices of $C_n$, and in which four edges are incident to each internal vertex: either four
edges directed inward, four edges directed outward, or two horizontal edges pointing in
the same direction and two vertical edges pointing in the same
direction. An \textbf{elementary flow grid} is a subgraph of $C_n$
whose edge set is the union of the edge sets of some simple flow grids. See Figure~\ref{fig:elem}.

\begin{theorem} \label{prop:asmfaces} The polytope $\P_\lambda(n) \in \F(ASM)(n)$ is a face of $\mathcal{A}(n)$, of dimension $\binom{n}{2}-|\lambda|$. In particular, 
$\P_{\varnothing}(n)=\ASM$ is a face of $\mathcal{A}(n)$, of dimension $\binom{n}{2}$.
\end{theorem}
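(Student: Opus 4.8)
The plan is to combine the convex-hull description already established in Proposition~\ref{prop:asmconv} with the correspondence between faces of $\mathcal{A}(n)$ and elementary flow grids from~\cite{StrASM}. Recall from that correspondence that every ASM is encoded by a simple flow grid, that each elementary flow grid $E$ determines a face of $\mathcal{A}(n)$, and that the vertices of this face are exactly the ASMs whose simple flow grid is a subgraph of $E$. I would first let $S_\lambda$ be the set of ASMs with $A_{ij}=0$ for $i-j\ge 2$ and for $(i,j)\in\lambda$, and define $E_\lambda$ to be the union of the simple flow grids of the elements of $S_\lambda$. Then $E_\lambda$ is elementary by construction, and the face it determines automatically contains every vertex of $\P_\lambda(n)$. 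By Proposition~\ref{prop:asmconv} it therefore suffices to prove the reverse inclusion of vertex sets: that every simple flow grid contained in $E_\lambda$ already has the prescribed zeros, so that the vertex set of the face of $E_\lambda$ is exactly $S_\lambda$ and the face equals $\conv(S_\lambda)=\P_\lambda(n)$.

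The engine for the reverse inclusion is that the forbidden entries pin the relevant partial sums, and hence pin edge directions in the flow grid. For any $A\in S_\lambda$ the entries of row $i$ in columns $1,\dots,i-2$ vanish, so every partial row sum $\sum_{j'\le j}A_{ij'}$ with $j\le i-2$ equals $0$; dually, since the entries of row $i$ in columns $n-\lambda_i+1,\dots,n$ vanish and the full row sum is $1$, every partial row sum with $j\ge n-\lambda_i$ equals $1$. Under the flow-grid encoding these pinned values force the horizontal edges of row $i$ over the forbidden columns to point in a single sense across all of $S_\lambda$, so $E_\lambda$ contains only those horizontal edges. Since a source or a sink configuration requires its two incident horizontal edges to point in opposite senses, neither can be realized inside $E_\lambda$ at a forbidden position; any simple flow grid contained in $E_\lambda$ must use a pass-through there, forcing $A_{ij}=0$. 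I expect this pinning/no-spurious-source step to be the main obstacle, since a priori the union of several pass-through configurations could produce an unwanted source or sink, and the argument has to be carried out carefully against the edge-direction conventions of~\cite{StrASM} (and separately for the lower region $i-j\ge2$ and the upper-right region $\lambda$, including the boundary edges).

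For the dimension I would compute the affine hull directly. Let $R=\{(i,j): i-j\le 1\}\setminus\lambda$ be the positions not forced to zero; counting the subdiagonal region and removing $\lambda$ gives $|R|=n^2-\binom{n-1}{2}-|\lambda|$. Every point of $\P_\lambda(n)$ is supported on $R$ and has all row and column sums equal to $1$, so it lies in the affine space $\mathcal{V}$ cut out by these conditions. The bipartite row–column incidence of $R$ is connected, since each row and each column retains a nonempty contiguous block of allowed positions, so the $2n$ unit-sum equations have rank $2n-1$ and $\dim\mathcal{V}=|R|-(2n-1)=\binom{n}{2}-|\lambda|$. It remains to check that $\P_\lambda(n)$ is full-dimensional in $\mathcal{V}$, equivalently that the defining equations of $\mathcal{V}$ are the only affine relations satisfied by $S_\lambda$; I would verify this by exhibiting enough ASMs in $S_\lambda$ to affinely span $\mathcal{V}$ (equivalently, by checking that the barycenter of $S_\lambda$ lies in the relative interior of $\mathcal{V}$). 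Specializing to $\lambda=\varnothing$ then yields $\dim\ASM=\binom{n}{2}$.
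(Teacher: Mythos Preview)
Your face argument via elementary flow grids is exactly the paper's approach: the paper writes down the same grid $E_\lambda$ explicitly (directed edges $((i,j+1),(i,j))$, $((i-1,j),(i,j))$ for $i-j\ge 2$; $((i,j-1),(i,j))$, $((i+1,j),(i,j))$ for $(i,j)\in\lambda$; all four directions otherwise) and observes that a simple flow grid contained in it cannot place a source or sink in the forbidden region because the required opposite-direction edge is absent. Your pinned-partial-sum argument is precisely the justification for why those are the only edges present, so at the level of ideas the two proofs of ``$\P_\lambda(n)$ is a face'' coincide.

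The dimension computation is where you diverge. The paper does not compute the affine hull; it invokes Theorem~4.5 of \cite{StrASM}, which says the dimension of the face equals the number of doubly directed regions in the corresponding elementary flow grid, and then simply counts these as $(n-1)^2-\binom{n-1}{2}-|\lambda|=\binom{n}{2}-|\lambda|$. This avoids your spanning step entirely. Your route is fine for the upper bound $\dim\P_\lambda(n)\le\binom{n}{2}-|\lambda|$, but the lower bound (full-dimensionality in $\mathcal V$) is left as a promise to ``exhibit enough ASMs,'' and your parenthetical equivalence is not right: $\mathcal V$ is an affine subspace, so every point is in its relative interior, and the barycenter of $S_\lambda$ lying there says nothing about the affine span. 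You would actually have to produce $\binom{n}{2}-|\lambda|+1$ affinely independent ASMs in $S_\lambda$, which is doable but more work than citing the doubly-directed-region count.
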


\begin{proof} 
In Proposition 4.2 of~\cite{StrASM}, it was shown that the simple flow grids of order $n$ are in bijection with the $n\times n$ alternating sign matrices. In this bijection, the internal vertices of the simple flow grid correspond to the ASM entries; the sources correspond to the ones of the ASM, the sinks correspond to the negative ones, and all other vertex configurations correspond to zeros.
In Theorem 4.3 of~\cite{StrASM}, it was shown that the faces of $\mathcal{A}(n)$ are in bijection with $n\times n$ elementary flow grids, with the complete flow grid $C_n$ in bijection with the full ASM polytope $\mathcal{A}(n)$. This bijection was given by noting that the convex hull of the ASMs in bijection with all the simple flow grids contained in an elementary flow grid is, in fact, an intersection of facets of the ASM polytope $\mathcal{A}(n)$, and is thus a face of $\mathcal{A}(n)$. Since, by Proposition~\ref{prop:asmconv}, $\P_{\lambda}(n)$ equals the convex hull of the ASMs in it, we need only show there exists an elementary flow grid whose contained simple flow grids correspond exactly to these ASMs.

We can give this elementary flow grid explicitly. We claim that the
directed edge set $S:=\bigcup_{(i,j)} S_{i,j}$ where
\[
S_{i,j} := \begin{cases}
\left\{\left((i, j),(i, j-1)\right), \left((i, j),(i+1, j)\right)\right\}  &\text{ if } i-j \geq 2\\
 \left\{\left((i, j),(i, j+1)\right), \left((i, j),(i-1, j)\right)\right\}  &\text{ if } (i,j) \in \lambda\\
 \left\{\left((i, j),(i, j \pm 1)\right),\left((i, j),(i \pm 1, j)\right)\right\}  & \text{ otherwise}
\end{cases}
\]
 is the union of the directed edge sets of all the simple flow grids in bijection with ASMs in $\P_{\lambda}(n)$. It is clear that the directed edge set of any simple flow grid corresponding to an ASM in $\P_{\lambda}(n)$ is in $S$; it remains to show that any  edge in $S$ appears in some simple flow grid. 
Note that the directed edges listed in the first two cases appear in every simple flow grid in bijection with an ASM in $\P_{\lambda}(n)$. For the remaining edges, note that if $A_{ij}=1$, then in the corresponding simple flow grid, $S_{i,j}=\left\{\left((i, j),(i, j \pm 1)\right),\left((i, j),(i \pm 1, j)\right)\right\}$. It is easy to construct a permutation matrix $A\in \P_{\lambda}(n)$ with $A_{ij}=1$ for any fixed $(i,j)$ with $i-j<2$ and $(i,j)\notin\lambda$.
 Thus the digraph with the edge set $S$ is an elementary flow grid. Furthermore, no other simple flow grid can be constructed from directed edges in this set, since such a simple flow grid would have to include an edge pointing in the wrong direction in either the region ${i}-{j}\geq 2$ or $(i,j)\in\lambda$.
Thus, $\P_{\lambda}(n)$ is a face of $\mathcal{A}(n)$.

To calculate the dimension of $\P_{\lambda}(n)$, we use the following notion from \cite{StrASM}. A \textbf{doubly directed region} of an elementary flow grid is a connected
collection of cells in the grid completely bounded by double directed edges but containing no
double directed edges in the interior. Theorem 4.5 of~\cite{StrASM} states that the dimension of a face of $\mathcal{A}(n)$ equals the number of doubly directed
regions in the corresponding elementary flow grid. The number of doubly directed regions in the elementary flow grid corresponding to $\P_{\lambda}(n)$ equals $(n-1)^2-\left(\binom{n-1}{2}+|\lambda|\right)=\binom{n}{2}-|\lambda|$. See Figure~\ref{fig:elem}.
\end{proof}

\begin{figure}
\begin{center}
(a) \hspace{1.65in} (b) \hspace{1.65in} (c) \hspace{1.5in}

\includegraphics[height=4cm]{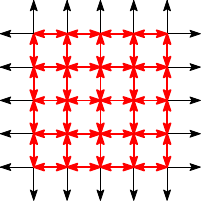}
\hspace{.3in}
\includegraphics[height=4cm]{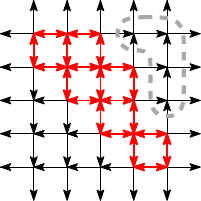}
\hspace{.3in}
\includegraphics[height=4cm]{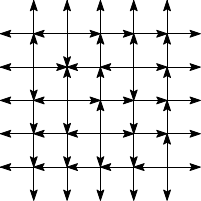}
\end{center}
\caption{(a) The complete flow grid $C_5$, which corresponds to the full ASM polytope $\mathcal{A}(5)$. (b) The elementary flow grid corresponding to $\P_{\lambda}(5)$ with $\lambda = (2,1,1)$. Note there are six doubly directed regions, thus $\P_{\lambda}(5)$ is a face of $\mathcal{A}(5)$ of dimension six. (c) A simple flow grid which corresponds to a $5\times 5$ ASM and is contained in the elementary flow grid of (b).}
\label{fig:elem}
\end{figure}

Our main result regarding  $\F(ASM)(n)$ is Theorem \ref{thm:main},
which we prove below. It requires the following
definition (see Figure~\ref{fig:family} for examples); also, recall from Section~\ref{sec:orderisflow} the definition of $G_P$.

\begin{definition} \label{star}
Let $\delta_n$ and $\lambda\subseteq\delta_n$ be as in Definition~\ref{faces}. Let $(\delta_n\setminus\lambda)^*$ be the poset with elements $p_{ij}$ corresponding to the positions $(i,j)\in \delta_n\setminus\lambda$ with partial order $p_{ij}\leq p_{i'j'}$ if $i\geq i'$ and $j\leq j'$.
\end{definition}

\smallskip

We now prove Theorem~\ref{thm:main} by first establishing two lemmas to show that
$\mathcal{P}_\lambda(n)$ is integrally equivalent to the order polytope
of the poset $(\delta_n\setminus\lambda)^*$. Then since this poset is
strongly planar, by Theorem~\ref{o-f} its order polytope is integrally equivalent
to the flow polytope $\F_{G_{(\delta_n\setminus\lambda)^*}}$.  

Given a matrix $\left(a_{ij}\right)_{i,j=1}^n\in \P_{\lambda}(n)$,
define the {\bf corner sum matrix} $\left(c_{ij}\right)_{i,j=1}^n$ by
\[c_{ij}=\displaystyle\sum_{\substack{1\leq i'\leq i,\\ j\leq j'\leq n}}
a_{i'j'}.\] 
For $S\subseteq \mathbb{R}$, let $\mathcal{A}(\delta_n \setminus \lambda,S)$ be the set of functions
$g: \delta_n \setminus \lambda \to S$. We view the order
polytope of $(\delta_n \setminus \lambda)^*$ as a subset of
$\mathcal{A}(\delta_n \setminus \lambda, [0,1])$. Define $\Psi:
\mathcal{P}_{\lambda}(n) \to \mathcal{A}(\delta_n \setminus \lambda, \mathbb{R})$
by $a \mapsto g_a$ where $g_a(i,j) = 1-c_{ij}$. See the second map in
Figure~\ref{fig:cryasmtoOP}.

\begin{lemma}\label{asm:lem1}
The image of $\Psi$ is in $\mathcal{A}(\delta_n
\setminus \lambda, [0,1])$, i.e. if $a\mapsto g_a$ then $g_a(i,j)=1-c_{ij} \in [0,1]$.
\end{lemma}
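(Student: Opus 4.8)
\emph{Reduction and the lower bound.} Since $g_m(i,j)=1-c_{i,j}$, the assertion $g_m(i,j)\in[0,1]$ is equivalent to $0\le c_{i,j}\le 1$, and I would prove exactly this for every position $(i,j)\in\delta_n\setminus\lambda$; recall that such a position lies strictly above the diagonal, i.e.\ $j\ge i+1$. The lower bound is immediate once the corner sum is read off by columns: writing
\[
c_{i,j}=\sum_{j'=j}^{n}\Bigl(\sum_{i'=1}^{i}m_{i',j'}\Bigr),
\]
each inner sum is a partial column sum of $m$ taken from the top, hence nonnegative by the defining inequalities of $\mathcal{A}(n)$ in Definition~\ref{def:asm}. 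As a sum of nonnegative terms, $c_{i,j}\ge 0$.

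\emph{The upper bound.} For $c_{i,j}\le 1$ the plan is to pass twice to a complementary rectangle using the row- and column-sum equalities, and only then invoke the forced zeros. First, since each of the rows $1,\dots,i$ sums to $1$, summing the full row-sum identity over $i'=1,\dots,i$ gives
\[
c_{i,j}=i-T,\qquad T:=\sum_{i'=1}^{i}\sum_{j'=1}^{j-1}m_{i',j'},
\]
where $T$ is the sum over the top-left rectangle (rows $1,\dots,i$, columns $1,\dots,j-1$). Next, since each of the columns $1,\dots,j-1$ sums to $1$, summing over those columns yields $T=(j-1)-B$ with $B:=\sum_{i'=i+1}^{n}\sum_{j'=1}^{j-1}m_{i',j'}$ the sum over the bottom-left rectangle. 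Combining, $c_{i,j}=1-(j-i)+B$, so it remains to show $B\le j-i$. Here the subdiagonal zeros enter: for any $i'\ge j+1$ and $j'\le j-1$ one has $i'-j'\ge 2$, so $m_{i',j'}=0$; thus only the rows $i+1,\dots,j$ contribute to $B$. Each such row contributes a partial row sum $\sum_{j'=1}^{j-1}m_{i',j'}\le 1$, again by Definition~\ref{def:asm}, and there are exactly $j-i$ such rows, whence $B\le j-i$ and $c_{i,j}\le 1$.

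\emph{Where the difficulty lies.} The lower bound and the bookkeeping that rewrites $c_{i,j}$ through $T$ and $B$ are routine. The only real content — and the sole place where the hypotheses beyond mere membership in $\mathcal{A}(n)$ are used — is the estimate $B\le j-i$ in the upper bound, which forces one to combine the prescribed zeros $m_{i',j'}=0$ for $i'-j'\ge 2$ with the position constraint $j\ge i+1$ so as to collapse the range of contributing rows down to exactly $\{i+1,\dots,j\}$. This is genuinely necessary: without the subdiagonal zeros the corner sum of an arbitrary alternating sign matrix at a staircase position can be as large as $i$ (e.g.\ the reversal permutation), so the staircase shape of the support of matrices in $\mathcal{P}_\lambda(n)$ is what makes the bound $c_{i,j}\le 1$ hold. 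Note that the extra zeros coming from $\lambda$ are not needed for the argument; they merely further constrain the matrix.
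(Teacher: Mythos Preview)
Your proof is correct. The lower bound is handled identically to the paper. For the upper bound the paper takes a slightly different route: instead of passing twice to the complementary rectangle and bounding the bottom-left block $B$ by partial \emph{row} sums, it stays with partial \emph{column} sums inside the top block. Concretely, the paper uses the subdiagonal zeros to note that for each column $j'\le i-1$ one already has $\sum_{i'=1}^{i} m_{i',j'}=1$ (the full column sum is reached by row $j'+1$), so
\[
c_{i,j}=c_{i,1}-\sum_{j'=1}^{i-1}\sum_{i'=1}^{i}m_{i',j'}-\sum_{j'=i}^{j-1}\sum_{i'=1}^{i}m_{i',j'}=i-(i-1)-(\text{sum of nonneg.\ partial column sums})\le 1.
\]
Your argument instead rewrites $c_{i,j}=1-(j-i)+B$ and uses the subdiagonal zeros to collapse the contributing rows in $B$ to $\{i+1,\dots,j\}$, bounding each by a partial row sum $\le 1$. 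Both approaches hinge on the same ingredient (the forced zeros for $i'-j'\ge 2$) and neither needs the zeros coming from $\lambda$; the paper's version avoids the double complement, while yours makes the role of the bound $j-i$ and the staircase geometry a bit more transparent.
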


\begin{proof}
We first show that $c_{ij}\geq 0$ for all $i$ and $j$. By the defining inequalities of the ASM polytope $\mathcal{A}(n)$ (see Definition~\ref{def:asm}), we have that the partial row and column sums of any $a\in\mathcal{A}(n)$ satisfy the following for each fixed $1\leq i,j\leq n$: 
\begin{equation}
\label{eq:partialsums}
\sum_{i'=1}^i a_{i'j}\geq 0 \text{ and } \sum_{j'=j}^n a_{ij'}\geq 0.
\end{equation}

Since $c_{ij}=\sum_{i'=1}^i \sum_{j'=j}^n a_{i'j'}$ and the interior sum is nonnegative by (\ref{eq:partialsums}), $c_{ij}\geq 0$ as desired.

Next we show that for $a\in \P_{\lambda}(n)$, $c_{ij}\leq 1$ for all $j>i\geq 1$. (Note this is not true for all matrices in
$\mathcal{A}(n)$; for example the permutation matrix corresponding to
$4321$ has $c_{2 3}=2$.) 

First note $c_{1j}\leq 1$ for all $j$, since by (\ref{eq:partialsums}) each $a_{1j}\geq 0$ and $\sum_{j=1}^n a_{ij}=1$. 

Now fix $j>i\geq 2$. We have
\[c_{ij}=\sum_{j'=j}^n\sum_{i'=1}^i a_{i'j'} = \sum_{j'=1}^n\sum_{i'=1}^i a_{i'j'} - \sum_{j'=1}^{j-1}\sum_{i'=1}^i a_{i'j'} = i - \sum_{j'=1}^{j-1}\sum_{i'=1}^i a_{i'j'}\]
since the sum of each row is $1$. Note  
\[\sum_{j'=1}^{j-1}\sum_{i'=1}^i a_{i'j'} = \sum_{j'=1}^{i-1}\sum_{i'=1}^i a_{i'j'} + \sum_{j'=i}^{j-1}\sum_{i'=1}^i a_{i'j'}.\] Now \[\sum_{j'=1}^{i-1}\sum_{i'=1}^i a_{i'j'} = \sum_{j'=1}^{i-1}\sum_{i'=1}^n a_{i'j'} = \sum_{j'=1}^{i-1} 1 = i-1\] since $a_{i'j'}=0$ for $j'<i<i'$.
So \[c_{ij}=i-(i-1)-\sum_{j'=i}^{j-1}\sum_{i'=1}^i a_{i'j'}=1-\sum_{j'=i}^{j-1}\sum_{i'=1}^i a_{i'j'}.\]
But by (\ref{eq:partialsums}), $\sum_{i'=1}^i a_{i'j'}\geq 0$, so
$\sum_{j'=i}^{j-1}\sum_{i'=1}^i a_{i'j'}$ and thus $c_{ij}\leq 1$ for
all $j>i$.

We therefore have that  $0\leq c_{ij}\leq 1$ for all $j>i$, so that
$0\leq g_a(i,j) \leq 1$ as desired.
\end{proof}

\begin{lemma} \label{asm:lem2}
The image of  $\Psi$ is in the order polytope
$\mathcal{O}\left((\delta_n\setminus\lambda)^*\right)$.
\end{lemma}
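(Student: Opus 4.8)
We need to show that the map $\Phi$ lands in the order polytope $\mathcal{O}((\delta_n\setminus\lambda)^*)$. By Lemma~\ref{asm:lem1} we already know that $g_m(i,j) = 1 - c_{i,j} \in [0,1]$ for every $(i,j)\in\delta_n\setminus\lambda$, so the coordinates are in the right range. What remains is the order-preservation condition: whenever $p_{ij} \leq_P p_{i'j'}$ in the poset $(\delta_n\setminus\lambda)^*$, we must have $g_m(i,j) \leq g_m(i',j')$. By Definition~\ref{star}, the covering relations are generated by $p_{ij}\leq p_{i'j'}$ when $i\geq i'$ and $j\leq j'$; since the order polytope only requires checking the partial order and it suffices to check covers (the relation being transitive), I would reduce to verifying two elementary cover inequalities.

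**The plan.** The plan is to translate the poset relation $g_m(i,j)\leq g_m(i',j')$ back into a statement about corner sums, namely $c_{i,j}\geq c_{i',j'}$ whenever $i\geq i'$ and $j\leq j'$, and then verify this by examining the two generating cover moves separately. First I would reduce to the two covers: moving down one row (from $p_{i-1,j}$ up to $p_{i,j}$, i.e. $i$ increases by one, $j$ fixed) and moving left one column (from $p_{i,j+1}$ up to $p_{i,j}$, i.e. $j$ decreases by one, $i$ fixed). For the down-cover I would compute
\[
c_{i,j} - c_{i-1,j} = \sum_{j\leq j'\leq n} m_{i,j'},
\]
which is exactly a partial row sum reading from column $j$ to the right, and for the left-cover I would compute
\[
c_{i,j} - c_{i,j+1} = \sum_{1\leq i'\leq i} m_{i',j},
\]
a partial column sum. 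The goal is then to show each of these differences is nonnegative, which would give $c_{i,j}\geq c_{i-1,j}$ and $c_{i,j}\geq c_{i,j+1}$, hence $g_m(i,j) = 1-c_{i,j} \leq 1-c_{i-1,j} = g_m(i-1,j)$ and similarly for the column move, matching the poset order.

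**Verifying the inequalities.** The partial column sum $\sum_{1\leq i'\leq i} m_{i',j}\geq 0$ is immediate from the defining inequalities of $\mathcal{A}(n)$ (Definition~\ref{def:asm}), so the left-cover is handled directly. The partial row sum $\sum_{j\leq j'\leq n} m_{i,j'}$ requires a little more care: the ASM inequalities give $0\leq \sum_{j'=1}^{j'} a_{ij'}\leq 1$ together with the full row sum $\sum_{j'=1}^n m_{i,j'} = 1$, so the complementary tail sum $\sum_{j'\geq j} m_{i,j'} = 1 - \sum_{j'<j} m_{i,j'}$ lies in $[0,1]$ and in particular is $\geq 0$. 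Thus both cover differences are nonnegative. One must be slightly careful that the poset elements involved in a cover genuinely lie in $\delta_n\setminus\lambda$ (so that $g_m$ is defined at them), but this is automatic: the order relation is only imposed between elements of the poset, and covers within the poset connect adjacent positions both of which are in $\delta_n\setminus\lambda$.

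**The main obstacle.** The computation is routine once the reduction to the two generating covers is in place, so I expect the only subtlety to be bookkeeping. The genuinely delicate point is to confirm that covers in $(\delta_n\setminus\lambda)^*$ really do correspond to single-step lattice moves (down by one row, or left by one column) between \emph{adjacent} positions of $\delta_n\setminus\lambda$, rather than arbitrary pairs with $i\geq i'$, $j\leq j'$. Since deleting the cells of $\lambda$ can in principle create covers that ``jump'' over a missing cell, I would either argue that it suffices to verify the inequality $c_{i,j}\geq c_{i',j'}$ for \emph{all} pairs with $i\geq i'$ and $j\leq j'$ (not just covers), which follows by summing the two nonnegative telescoping contributions over a monotone lattice path from $(i',j')$ to $(i,j)$, or else check covers directly. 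The former is cleaner and sidesteps any worry about the shape of $\delta_n\setminus\lambda$, so I would take that route: fix any comparable pair $p_{ij}\leq p_{i'j'}$ with $i\geq i'$, $j\leq j'$, write $c_{i,j} - c_{i',j'}$ as a sum of partial row and column sums obtained by telescoping along a staircase path, and observe each term is nonnegative by the ASM inequalities.
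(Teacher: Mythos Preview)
Your proposal is correct and follows essentially the same approach as the paper: both arguments reduce the order-polytope condition to the monotonicity $c_{i,j}\ge c_{i',j'}$ whenever $i\ge i'$ and $j\le j'$, and conclude $g_m(i,j)\le g_m(i',j')$ exactly when $p_{ij}\le p_{i'j'}$ in $(\delta_n\setminus\lambda)^*$. The paper simply asserts this monotonicity without justification, whereas you supply the details via the partial row and column sums and the defining inequalities of $\mathcal{A}(n)$; your telescoping argument for arbitrary comparable pairs is the right way to avoid any worry about covers jumping over cells removed by $\lambda$.
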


\begin{proof}
By Lemma~\ref{asm:lem1} we know that the image of $\Psi$ is in $\mathcal{A}(\delta_n
\setminus \lambda, [0,1])$.
Note that if $i'\leq i$ and $j'\geq j$, then
$c_{ij}\geq c_{i'j'}$, thus we have that $g_a(i,j)\leq g_a(i',j')$ if and only
if $(i,j)\leq (i',j')$ in $(\delta_n\setminus\lambda)^*$. 
So $g_a$ is in the order
polytope $\mathcal{O}\left((\delta_n\setminus\lambda)^*\right)$. 
\end{proof}

\begin{proof}[Proof of Theorem~\ref{thm:main}]
By Lemmas~\ref{asm:lem1} and \ref{asm:lem2} we have that the map
$\Psi$ is an affine transformation from $\mathcal{P}_{\lambda}(n)$ to
$\mathcal{O}\left((\delta_n\setminus\lambda)^*\right)$ of the form
$a \mapsto {\bf 1} - {\bf A}a$ where $A$ is
a $0,1$-upper unitriangular matrix. Thus, $\Psi$ is a bijection between
$\mathcal{P}_{\lambda}(n)$ and
$\mathcal{O}\left((\delta_n\setminus\lambda)^*\right)$ that preserves
their respective lattices. This shows that the two polytopes are
integrally equivalent.

Finally since the poset $(\delta_n\setminus \lambda)^*$ is strongly
planar, by Theorem \ref{o-f} $\P_{\lambda}(n)$ is also integrally
equivalent to the flow polytope
$\F_{G_{(\delta_n\setminus\lambda)^*}}$.  \end{proof}

By Stanley's theory of order polytopes \cite{Stop} (see Theorem~\ref{thm:stop}) we express the
volume and Ehrhart polynomial of the polytopes in this family in terms
of their associated posets. Recall that $e(P)$ denotes the number of
linear extensions of the poset $P$.

\begin{corollary}[\cite{Stop}]
\label{cor:ASMcor}
For $\mathcal{P}_\lambda(n)$ in $\F(ASM)(n)$ we have that its normalized
volume is
\[
\vol(\mathcal{P}_\lambda(n)) = e\left( (\delta_n\backslash \lambda)^*\right),
\]
and its Ehrhart polynomial is 
\[
L_{\mathcal{P}_\lambda(n)}(t) = \Omega_{(\delta_n\backslash \lambda)^*}(t+1).
\]
\end{corollary}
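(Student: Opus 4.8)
The plan is to deduce this corollary directly by transporting Stanley's results on order polytopes (Theorem~\ref{thm:stop}) across the affine equivalence established in Theorem~\ref{thm:main}. First I would recall that the map $\Phi$ constructed in the proof of Theorem~\ref{thm:main} is not merely a combinatorial equivalence but an affine bijection whose homogeneous part is $-A$, with $A$ a $0,1$-upper unitriangular matrix. Since $\det(A)=1$, the map $\Phi$ is unimodular, so it carries the integer lattice of the affine span of $\P_\lambda(n)$ bijectively onto the integer lattice of the affine span of $\mathcal{O}((\delta_n\setminus\lambda)^*)$, and likewise for every dilate $t\cdot\P_\lambda(n)$ with $t\in\mathbb{N}$. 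Consequently $\P_\lambda(n)$ and $\mathcal{O}((\delta_n\setminus\lambda)^*)$ have the same Ehrhart polynomial and the same normalized volume.

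With this in hand, the two claimed formulas follow by specializing the corresponding statements in Theorem~\ref{thm:stop} to the poset $P=(\delta_n\setminus\lambda)^*$. For the volume, part (ii) gives $\vol(\mathcal{O}(P))=e(P)$, the number of linear extensions of $P$, which yields $\vol(\P_\lambda(n))=e((\delta_n\setminus\lambda)^*)$. For the Ehrhart polynomial, part (iii) gives $L_{\mathcal{O}(P)}(t)=\Omega(P,t+1)$, which yields $L_{\P_\lambda(n)}(t)=\Omega_{(\delta_n\setminus\lambda)^*}(t+1)$.

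Since the heavy lifting is already done in Theorem~\ref{thm:main}, there is essentially no substantive obstacle here; the only point that requires care is confirming that the affine equivalence preserves lattice points, and not merely face structure. This is precisely guaranteed by the unitriangular, hence unimodular, nature of the homogeneous part of $\Phi$, which I would emphasize explicitly in order to justify the equality of Ehrhart polynomials rather than only a combinatorial equivalence. In short, the corollary is an immediate consequence of chaining the volume-preserving, lattice-preserving bijection $\Phi$ with Stanley's theorem on order polytopes.
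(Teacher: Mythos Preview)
Your proposal is correct and matches the paper's approach exactly: the corollary is stated in the paper without explicit proof, being an immediate consequence of combining the lattice-preserving affine equivalence of Theorem~\ref{thm:main} with Stanley's Theorem~\ref{thm:stop}, parts (ii) and (iii). Your emphasis on the unimodularity of $\Phi$ to justify the equality of Ehrhart polynomials is the right point to make explicit.
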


\begin{figure}
\begin{center}
\includegraphics[scale=.8]{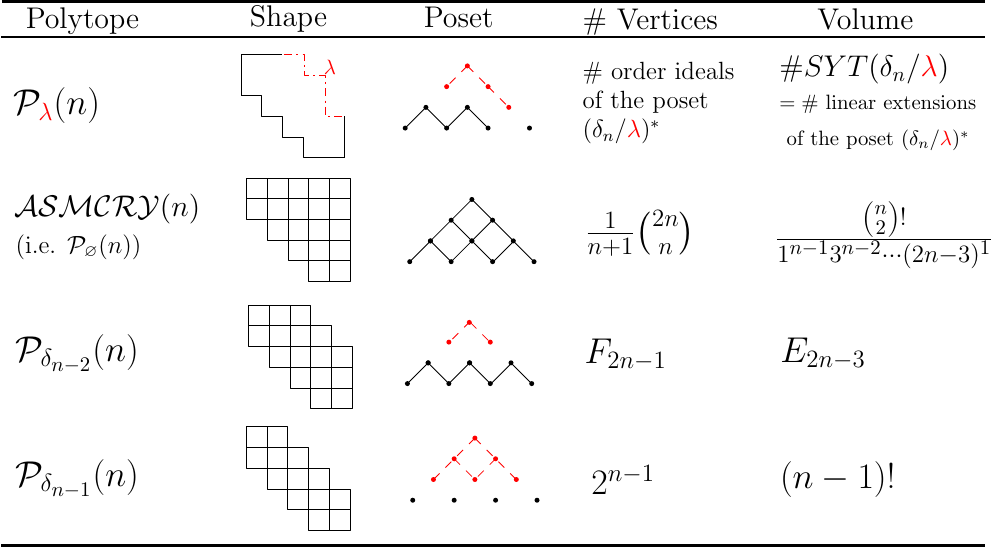}
\end{center}
\caption{Some polytopes in the family $\F(ASM)(n)$ and their corresponding numbers of vertices and volumes; see Theorem~\ref{thm:main} and Corollaries~\ref{cor1}, \ref{cor:ASMcor}, \ref{cor:anti}, and \ref{cor:fib}. `Shape' refers to the entries in the matrix not fixed to be zero.  All diagrams are drawn in the case $n=5$.}
\label{fig:family}
\end{figure}

Note that using Theorem \ref{ps} and the discussion below it, we can
express the volume and Ehrhart polynomial of any flow polytope as a
Kostant partition function. Thus, Theorem \ref{thm:main} gives us
several Kostant partition function identities. Corollaries \ref{cor1}, \ref{cor:anti} and \ref{cor:fib} compute the volumes and Ehrhart polynomials of three subfamilies of polytopes in $\F(ASM)(n)$ that are associated to posets with a nice number of linear extensions and
vertices. This includes the ASM-CRY polytope. See Figure~\ref{fig:family}.

\begin{figure}
\begin{center}
\includegraphics[scale=0.8]{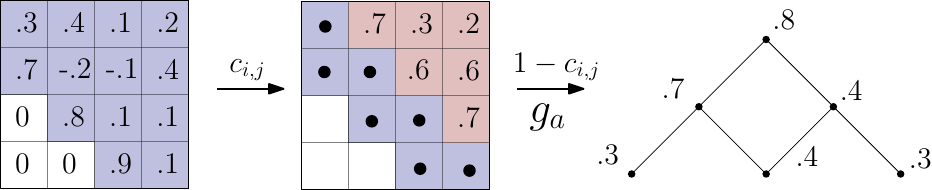}
\end{center}
\caption{A map from a point in $\mathcal{ASMCRY}(4)$ to a point in the order polytope. First, take the northeast corner sum of each entry above the main diagonal. Then subtract that value from 1.}
\label{fig:cryasmtoOP}
\end{figure}

\begin{proof}[Proof of Corollary~\ref{cor1}]
When
$\lambda=\varnothing$,  $\P_{\varnothing}(n)$ is integrally equivalent to the
order polytope $\mathcal{O}_{\delta_n^*}$ of the poset $\delta_n^*$ (that is, the type $A_{n-1}$
positive root lattice). 

By Theorem~\ref{thm:stop} the number of vertices and volume 
of $\P_{\varnothing}(n)$  are given by the number of order ideals and 
linear extensions of the poset  $\delta_n^*$ respectively. Next we
compute each of these. 

The order ideal of the poset $\delta_n^*$ correspond to shapes
$\lambda \subseteq \delta_n$ which in turn correspond to Dyck paths
counted by the Catalan number $C_n = \frac{1}{n+1}\binom{2n}{n}$.

The number of linear extension of this poset is
the number of standard Young tableaux (SYT) of shape
$\delta_n=(n-1,n-2,\ldots,2,1)$. Thus
\[
\vol \P_{\varnothing}(n) = \#SYT(\delta_{n}) =\frac{\binom{n}{2}!}{1^{n-1}3^{n-2}\cdots (2n-3)^1},
\]
where the second equality follows by using  the hook-length formula
\cite[Cor. 7.21.6]{EC} to compute this number of tableaux. 

Lastly, by Theorem~\ref{thm:stop} $L_{\P_{\varnothing}(n)}(t) = \Omega_{\delta^*_{n}}(t+1)$. When
$t$ is an integer, $\Omega_{\delta^*_{n}}(t+1)$ counts 
the the number of plane partitions of shape $\delta_n$
with largest part $\leq t$. By a result of Proctor \cite{Proctor} (see also
\cite{FK}) this number is given by the product formula in the RHS of \eqref{eq:proctor}.
\end{proof}

Since the polytope $\CRY(n)$ is contained in $\ASM$ then we can bound the volume and
number of lattice points of the former with the corresponding volume
and number of lattice points of the latter. 

\begin{corollary} \label{cor:boundscry}
For $n\geq 1$ and $t\in \mathbb{N}$ we have that 
\begin{align*}
\prod_{i=1}^{n-2} \Cat(i) & \,\leq\, \#
SYT(\delta_n)\\
L_{\CRY(n)}(t) & \,\leq \, \prod_{1 \leq i<j\leq n}
                 \frac{2t+i+j-1}{i+j-1}.
\end{align*}
\end{corollary} 

\begin{proof}
Since $\CRY(n) \subseteq \ASM$ and both polytopes have the same
dimension then we can compare their normalized volumes to obtain $\vol(\CRY(n)) \leq
\vol(\ASM)$. Also by comparing the number of lattice points of their
dilations we have that for $t$ in $\mathbb{N}$, $L_{\CRY(n)}(t) \leq
L_{\ASM}(t)$. The result then follows by combining these bounds with
Theorem~\ref{thm:volCRY} and Corollary~\ref{cor1} respectively.
\end{proof}

\begin{remark}
Since the normalized volume of $\CRY(n)$ has a product formula, one wonders if there is a product formula for the number of its integer points; however, data suggests the answer to be negative (see data in \cite[Sec. 6]{LLY}, \cite{moorefield} and \cite{flowsite}).  It would interesting
to study the asymptotics of $L_{\CRY(n)}(t)$.
\end{remark}

We give a few other examples of polytopes in the family $\F(ASM)(n)$ that
have known nice formulas for the volume, namely, in the cases $\lambda = \delta_{n-k}$ for $k\geq 1$.
See Figure~\ref{fig:family}. 

Let $[n]$ be the poset with $n$ elements and no
relations and $z_{2n-1}$ and $z_{2n}$ denote the {\bf zigzag posets} with
$2n-1$ and $2n$ elements, respectively: \includegraphics{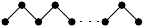} and \includegraphics{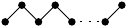}.

\begin{corollary}
\label{cor:anti}
$\P_{\delta_{n-1}}(n)$ is
  integrally equivalent to the order polytope $\O({[n-1]})$ of the antichain $[n-1]$, it has $2^{n-1}$ vertices and its normalized volume equals $(n-1)!$.
\end{corollary}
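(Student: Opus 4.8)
The plan is to reduce both assertions to poset combinatorics by combining Theorem~\ref{thm:main} with Theorem~\ref{thm:stop}. By Theorem~\ref{thm:main} each $\P_\lambda(n)$ is affinely equivalent to the order polytope $\O\big((\delta_n\setminus\lambda)^*\big)$, so by Theorem~\ref{thm:stop} its vertices are in bijection with the order ideals of $(\delta_n\setminus\lambda)^*$ and its normalized volume equals the number of linear extensions $e\big((\delta_n\setminus\lambda)^*\big)$. Hence for each of the two polytopes the entire problem is to identify the associated poset explicitly and then count its order ideals and its linear extensions.

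First I would treat $\P_{\delta_{n-1}}(n)$. Using the placement convention of Definition~\ref{faces}, the partition $\delta_{n-1}$ occupies exactly the cells $(i,j)$ with $j-i\geq 2$, so $\delta_n\setminus\delta_{n-1}$ is the single superdiagonal $\{(i,i+1)\mid 1\leq i\leq n-1\}$. By the order of Definition~\ref{star}, $(i,i+1)$ and $(i',i'+1)$ are comparable only when $i=i'$, so $(\delta_n\setminus\delta_{n-1})^*$ is the antichain $[n-1]$; Theorem~\ref{thm:main} then gives $\P_{\delta_{n-1}}(n)\cong\O([n-1])$, proving the first sentence. Because the order ideals of an antichain are all its subsets and its linear extensions are all orderings of its elements, Theorem~\ref{thm:stop} immediately yields that this polytope is the $(n-1)$-cube with $2^{n-1}$ vertices and normalized volume $(n-1)!$.

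For the second polytope I would run the same identification. Here $\delta_{n-2}$ occupies the cells with $j-i\geq 3$, so $\delta_n\setminus\delta_{n-2}$ consists of the two superdiagonals $\{(i,j)\mid 1\leq j-i\leq 2\}$, a set of $2n-3$ cells. Under Definition~\ref{star} the cells with $j-i=1$ form a minimal antichain, those with $j-i=2$ a maximal antichain, and each cell $(i,i+2)$ covers precisely $(i,i+1)$ and $(i+1,i+2)$; thus $(\delta_n\setminus\delta_{n-2})^*$ is a height-two zigzag (fence) poset $z_{2n-3}$ on $2n-3$ elements. The remaining task is to count its order ideals (for the vertex count) and its linear extensions (for the normalized volume) via Theorem~\ref{thm:stop}.

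The main obstacle is exactly this last enumeration. Unlike the antichain, the fence has no trivial count: peeling off the top-left minimal cell, or conditioning on which maximal cells $(i,i+2)$ lie in a given ideal, produces a two-term recursion whose solutions are of Fibonacci type (order ideals) and alternating/Euler type (linear extensions). I would set up and solve those recursions in closed form and then compare the results to the asserted $2^{n-1}$ and $(n-1)!$. Because the closed forms for a fence differ from those for an antichain, I would verify the small cases $n=3,4$ by hand before trusting any general formula, since that is precisely where an off-by-one in the indexing of $\delta_{n-k}$ would reveal itself and where the match with the claimed values must be checked carefully.
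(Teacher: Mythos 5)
Your treatment of the first sentence is correct and is precisely the paper's argument: you identify $(\delta_n\setminus\delta_{n-1})^*$ as the antichain $[n-1]$ (the paper leaves this identification implicit), invoke Theorem~\ref{thm:main} to pass to the order polytope, and then read off from Theorem~\ref{thm:stop} that the polytope is an $(n-1)$-cube with $2^{n-1}$ vertices (order ideals) and normalized volume $(n-1)!$ (linear extensions).

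The obstacle you hit on the second sentence is not a gap in your approach but a typo in the statement: ``$\P_{\delta_{n-2}}(n)$'' there should read ``$\P_{\delta_{n-1}}(n)$'', so that both sentences concern the antichain polytope. Three pieces of internal evidence: (i) the paper's own proof of Corollary~\ref{cor:anti} discusses only the antichain $[n-1]$ and never touches a second poset; (ii) Corollary~\ref{cor:fib} asserts that $\P_{\delta_{n-2}}(n)$ has $F_{2n-1}$ vertices and normalized volume $E_{2n-3}$, which is incompatible with the literal reading of Corollary~\ref{cor:anti}; (iii) for $n=3$ one has $\delta_{n-2}=\delta_1=\varnothing$, so $\P_{\delta_{n-2}}(3)=\mathcal{ASMCRY}(3)$, which by Corollary~\ref{cor1} has $\Cat(3)=5$ vertices, not $2^{n-1}=4$. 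Your analysis of $(\delta_n\setminus\delta_{n-2})^*$ as a fence on $2n-3$ elements is itself correct (indeed your label $z_{2n-3}$ is better indexed than the paper's $z_{2n-1}$ in Corollary~\ref{cor:fib}), and the recursions you propose would return exactly the Fibonacci and Euler numbers --- i.e.\ they would confirm the mismatch you suspected rather than the claimed values, which is what your planned check at $n=3,4$ would have revealed. With the typo corrected, your antichain computation already proves the entire corollary, so nothing further is needed.
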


\begin{proof}
Since the poset $[n-1]$ is an antichain, there are no relations, so the number
  of order ideals is $2^{n-1}$ and the number of linear extensions is $(n-1)!$. Thus, the result follows from Theorem~\ref{thm:main}. 
\end{proof}

\begin{corollary}
\label{cor:fib} 
$\P_{\delta_{n-2}}(n)$ is
  integrally equivalent to the order polytope $\O({z_{2n-3}})$ of
  the zigzag poset $z_{2n-3}$, its number of vertices is given by the
  Fibonacci number $F_{2n-1}$, and its normalized volume is given by the Euler number $E_{2n-3}$. 
\end{corollary}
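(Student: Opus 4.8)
The plan is to reduce the entire statement to the combinatorics of the poset $(\delta_n\setminus\delta_{n-2})^*$, exactly as in the proofs of Corollaries~\ref{cor1} and~\ref{cor:anti}. By Theorem~\ref{thm:main} the polytope $\P_{\delta_{n-2}}(n)$ is affinely equivalent to the order polytope $\O\big((\delta_n\setminus\delta_{n-2})^*\big)$, so by Theorem~\ref{thm:stop} its number of vertices equals the number of order ideals of this poset and its normalized volume equals the number of linear extensions. Thus it suffices to identify $(\delta_n\setminus\delta_{n-2})^*$ with the zigzag poset $z_{2n-1}$ and then quote two classical enumerations: the order ideals of a fence are counted by Fibonacci numbers, and the linear extensions of a fence are counted by Euler numbers.

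First I would record the underlying set. Taking $\lambda=\delta_{n-2}=(n-3,n-4,\ldots,1)$ in Definition~\ref{faces}, the cells removed from row $i$ (for $1\le i\le n-3$) occupy columns $i+3,\ldots,n$; hence the surviving cells of $\delta_n\setminus\delta_{n-2}$ are exactly $(i,i+1)$ and $(i,i+2)$ for $1\le i\le n-2$, together with the single cell $(n-1,n)$. I will call the $(i,i+1)$ the \emph{short} cells and the $(i,i+2)$ the \emph{long} cells.

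The main step is to read off the order relations from Definition~\ref{star} and recognize the fence. Since $p_{ij}\le p_{i'j'}$ iff $i\ge i'$ and $j\le j'$, one checks immediately that $(i,i+1)\le(i,i+2)$ (a common-row comparison) and $(i+1,i+2)\le(i,i+2)$ (a common-column comparison), so each long cell $(i,i+2)$ sits above exactly the two short cells $(i,i+1)$ and $(i+1,i+2)$. The crux is to verify there are no further relations: if $(a,b)\le(c,d)$ for two surviving cells, then $a\ge c$ and $b\le d$, and combining these with the constraints $b\in\{a+1,a+2\}$ and $d\in\{c+1,c+2\}$ forces $a\in\{c,c+1\}$ and pins down $b$ and $d$, leaving only the two covers just described. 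Consequently the poset has height two, with the short cells minimal and the long cells maximal, alternating along
\[
(1,2),\ (1,3),\ (2,3),\ (2,4),\ \ldots,\ (n-2,n-1),\ (n-2,n),\ (n-1,n),
\]
which is the zigzag poset $z_{2n-1}$.

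Finally I would invoke the two standard counts for fences. Each order ideal is determined by choosing the short cells it contains and then the long cells (a long cell may be included only if both of its adjacent short cells are), which yields the Fibonacci recurrence and gives $F_{2n-1}$ order ideals; by Theorem~\ref{thm:stop}(i) this is the number of vertices. The linear extensions of a fence are in bijection with the alternating permutations of the same length, hence counted by the Euler number $E_{2n-3}$; by Theorem~\ref{thm:stop}(ii) this is the normalized volume. I expect the only genuine work to be the verification that the two families of covers exhaust all relations; the Fibonacci and Euler enumerations are classical and may simply be cited.
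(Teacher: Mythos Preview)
Your approach is essentially identical to the paper's: invoke Theorem~\ref{thm:main} to pass to the order polytope of $(\delta_n\setminus\delta_{n-2})^*$, identify this poset as a zigzag, and then apply Theorem~\ref{thm:stop} together with the classical Fibonacci and Euler enumerations for fences. You supply considerably more detail than the paper's proof, which simply asserts the identification and the two counts (phrasing the linear-extension count via standard Young tableaux of skew shape $\delta_n/\delta_{n-2}$ rather than alternating permutations, but these are of course the same objects). One small slip, inherited from the statement itself: your explicit list $(1,2),(1,3),\ldots,(n-1,n)$ contains $2(n-2)+1=2n-3$ cells, so the poset is $z_{2n-3}$, not $z_{2n-1}$; the enumerative claims $F_{2n-1}$ and $E_{2n-3}$ are nonetheless correct, since a fence on $m$ elements has $F_{m+2}$ order ideals and $E_m$ linear extensions.
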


\begin{proof}
The  number of order ideals of the zigzag poset with $n$ elements is
given by the Fibonacci number $F_{n+2}$. To see this, note the posets $z_0$ and $z_1$ have $F_2=1$ and $F_3=2$ order ideals
respectively. For the zigzag $z_n$, the number of order ideals equals
the sum of order ideals of $z_{n-1}$ and $z_{n-2}$ depending on
whether or not the order ideals includes the leftmost (minimal) element of the
poset. The result follows by induction.

The
  number of linear extensions of this poset is the number
  of SYT of skew shape ${\delta_n/\delta_{n-2}}$ which is given by the Euler number $E_{2n-3}$. Thus, the result follows from Theorem~\ref{thm:main}. 
\end{proof}

\begin{remark}
For the case $\lambda=\delta_{n-k}$, the polytope
$\mathcal{P}_{\delta_{n-k}}(n)$ is integrally equivalent to the order polytope of
the poset $(\delta_n\setminus \delta_{n-k})^*$. The number of vertices
of the polytope (order ideals of the poset) is given by the number of Dyck paths with height at most $k$
\cite[\href{https://oeis.org/A211216}{A211216}]{OEIS}, \cite[\S
3.1]{KRT}. The volume of the polytope is given by the
number of skew SYT of shape $\delta_n/\delta_{n-k}$. There are
formulas for this number of SYT as determinants of Euler numbers (e.g
see Baryshnikov-Romik \cite{BR}). 
\end{remark}

We now turn from our investigation of the family of polytopes $\F(ASM)(n)$ to triangulations of flow and order polytopes.

\section{Triangulations of flow polytopes of planar graphs}
\label{sec:planar}
 
As we have seen in Section \ref{sec:flow}, flow polytopes of planar
graphs are integrally equivalent to order polytopes. In this section
we relate a known triangulation of flow polytopes by
Danilov--Karzanov--Koshevoy and a well known triangulation of order
polytopes.

\subsection{Canonical triangulation of order polytopes}
Recall that
vertices of an order polytope $\O(P)$ correspond to characteristic
functions of order filters (i.e. complements of order ideals). Stanley \cite{Stop} gave a canonical
way of triangulating the order polytope $\O(P)$ for an arbitrary poset
$P$. Namely, for a linear extension
$(a_1, a_2, \ldots, a_m)$ of the poset $P$ on elements $\{a_1, a_2,
\ldots, a_m\}$, define the simplex \begin{equation} \Delta_{a_1, a_2,
    \ldots, a_m}:=\{(x_1, \ldots, x_m) \in [0,1]^{m} \mid x_{a_1} \leq
  x_{a_2}\leq \cdots \leq x_{a_m}\}. \end{equation} 

Note that the $m+1$ vertices of this simplex are $0,1$ vectors whose
$0$-coordinates are indexed by length $k$ prefixes $a_1,\ldots,a_k$ of the linear
extension for $k=0,1,\ldots,m$.
The simplices  $\Delta_{a_1, a_2,
    \ldots, a_m}$
corresponding to all linear extensions of $P$ are top dimensional
simplices in a triangulation of $\O(P)$, which we refer to as the
\textbf{canonical triangulation of $\O(P)$}.  There are also two
established combinatorial  ways of triangulating  flow polytopes: one given by
Postnikov and Stanley (PS)  \cite{P13,S} (defined in Section \ref{sec:ps}), and one by Danilov, Karzanov and
Koshevoy (DKK) \cite{kosh} (defined in Section \ref{sec:kosh}). All the
aforementioned triangulations are unimodular. The goal of this section
is to relate the DKK triangulation of flow polytopes of planar graphs
and Stanley's linear extension triangulation of the corresponding
order polytope.

As before, it will be more convenient for us to work with the
integrally equivalent polytope $\widehat{\O}(P) \intequiv \O(P)$ and
the integral equivalence $\nu$ from Lemma \ref{lem:stan}. The canonical triangulation of $\O(P)$ maps under $\nu^{-1}$ to the  canonical triangulation of $\widehat{\O}(P)$. We will denote $\nu^{-1}(\Delta_{a_1, a_2,
    \ldots, a_m})$ by  $\widehat{\Delta}_{a_1, a_2,
    \ldots, a_m}$. Of course:
    
     \begin{equation} \widehat{\Delta}_{a_1, a_2,
    \ldots, a_m}:=\{(x_{\hat{0}}, x_1, \ldots, x_n, x_{\hat{1}}) \in [0,1]^{m+2} \mid 0=x_{\hat{0}}\leq x_{a_1} \leq
  x_{a_2}\leq \cdots \leq x_{a_m}\leq x_{\hat{1}}=1\}. \end{equation} 
In this section, we
show that given a planar graph $G$, the canonical triangulation of
$\widehat{\O}(P_G)$ maps to  a  DKK triangulation of  $\F_G$ via the integral equivalence $\phi$ from Theorem \ref{o-f}. This result was first observed by Postnikov \cite{P13}. We also construct a direct bijection between linear extensions of $P_G$, which  index the  canonical triangulation of
$\widehat{\O}(P_G)$, and maximal cliques of $G$, which index the DKK triangulation of $\F_G$.
In  Section \ref{sec:tri}, we   prove for a general graph $G$ that the DKK triangulations of $\F_G$ are  framed Postnikov-Stanley triangulations of $\F_G$. In particular, the canonical triangulation of $\widehat{\O}(P_G)$ for a planar graph $G$  maps to a framed Postnikov-Stanley triangulation of $\F_G$ under  integral equivalence $\phi$ from Theorem \ref{o-f}.

In the following subsection we review the results of Danilov, Karzanov and Koshevoy \cite{kosh}.

\subsection{Danilov--Karzanov--Koshevoy triangulation of flow
  polytopes}

Let  $G$ be a connected graph on the vertex set $[n]$ with edges oriented from
smaller to bigger vertices. Recall from
Proposition~\ref{prop:verticesflowpoly}, that vertices of $\F_G$ are
given by unit flows along maximal directed paths from the
source $1$ to the sink $n$. Following \cite{kosh}, we call such
maximal paths 
\textbf{routes}. 

The following definitions  also follow
\cite{kosh}. Let $v$ be an \textbf{inner} vertex of $G$ whenever $v$ is neither a source nor a sink. 
Fix a \textbf{framing} at each inner vertex $v$, that is, a linear
ordering $\prec_{\i(v)}$ on the set of incoming edges $\i(v)$ to $v$
and the linear ordering $\prec_{\out(v)}$ on the set of  outgoing
edges $\out(v)$ from $v$. A {\bf framed graph}, denoted by $(G,\prec)$, is a graph $G$ with a
framing $\prec$ at each inner vertex. For a framed graph $G$ and an inner
vertex $v$, we denote by $\In(v)$ and by $\Out(v)$ the set of maximal
paths ending in $v$ and   the set of maximal paths starting at $v$,
respectively. We define the order $\prec_{\In(v)}$ on the paths in
$\In(v)$ as follows. If $P, Q \in \In(v)$, $P\neq Q$,  then let $w$ be the 
unique vertex after which $P$ and $Q$ coincide and before which they
differ. Let $e_P$ be the edge of $P$ entering $w$ and $e_Q$ be the edge of $Q$ entering $w$. Then $P \prec_{\In(v)} Q$ if and only if $e_P \prec_{\i(w)} e_Q$.  Similarly, if $P, Q \in \Out(v)$, $P\neq Q$,  then let $w$ be the 
unique vertex before which $P$ and $Q$ coincide and after which they
differ. Let $e_P$ be the edge of $P$ leaving $w$ and $e_Q$ be the edge of $Q$ leaving $w$. Then $P \prec_{\Out(v)} Q$ if and only if $e_P \prec_{\out(w)} e_Q$.

Given a route $P$ with an inner vertex $v$, denote by $Pv$ the maximal
subpath of $P$ ending at $v$ and by $vP$ the
maximal subpath of $P$ starting at $v$. We say that the routes $P$ and $Q$ are \textbf{coherent at a vertex} $v$ which is an inner vertex of both $P$ and $Q$ if the paths $Pv, Qv$ are ordered the same way as $vP, vQ$; that is,  
$Pv \prec_{\In(v)}Qv$ if and only if 
$vP \prec_{\Out(v)}vQ$. 
We say that routes $P$ and $Q$ are \textbf{coherent} if they are
coherent at each common inner vertex. We call a set $C$ of mutually
coherent routes a \textbf{clique}. Let 
$\mathcal{C}^{\max}(G,\prec)$ be the set of maximal cliques
(with respect to number of routes) of the framed graph $G$. 

\begin{definition} \label{def:simp} Given a framed graph $G$, and a  clique $C$ of the framed graph $G$, denote by $\Delta_{C}$ the  convex hull of the vertices of $\F_G$ corresponding to the unit flows along  routes in the   clique $C$. \end{definition}

Theorem \ref{k} below is a special case of \cite[Theorems 1 \& 2]{kosh}.

\begin{theorem}  \cite[Theorems 1 \& 2]{kosh} \label{k} Given a framed
  graph $(G,\prec)$,    the set of simplices 
\[
\{\Delta_C \mid C \in
  \mathcal{C}^{\max}(G,\prec)\},
\] 
corresponding to maximal cliques of the framed graph $G$ are the top dimensional simplices in a regular unimodular triangulation of $\F_G$. 
 Moreover, lower dimensional simplices $\Delta_C$ of this triangulation are obtained as convex hulls of the vertices corresponding to the routes in non-maximal cliques $\mathcal{C}$ of $G$.
\end{theorem}

We call the triangulations specified in Theorem \ref{k} the
\textbf{Danilov-Karzanov-Koshevoy (DKK) triangulations of $\F_G$}. Each such triangulation comes from a particular framing of the graph.
 We are now ready to prove that the canonical triangulation of
 $ \widehat{\O}(P_G)$ is integrally equivalent to a DKK triangulation of $\F_G$ via the map $\phi: \widehat{\O}(P_G)\rightarrow \F_G$ from Theorem \ref{o-f}. We
 now define the framing needed for this result. Consider a planar
 graph $G$ on the vertex set $[n]$ with a particular planar embedding  so that if vertex $i$ is
in position $(x_i,y_i)$ then $x_i < x_j$ whenever $i<j$.  At each
 vertex $v\in [2,n-1]$ of $G$ there is a natural order on the edges
 coming from the planar drawing of the graph: order the incoming edges
 as well as the outgoing edges top to bottom in increasing order; by top to bottom we mean that if we put a small enough circle $C$ centered at vertex $i$ so that all incoming and outgoing edges to vertex $i$ intersect the circle, then we order the incoming (and outgoing) edges top to bottom by decreasing $y$ coordinates of their intersection with the circle $C$.   We call this framing the \textbf{planar framing} of $G$, to emphasize that this framing comes from a particular planar embedding of the graph $G$.

\subsection{The canonical triangulation of $\widehat{\O}(P_G)$ is integrally equivalent to  a
  DKK triangulation of $\F_G$} \label{sec:kosh}

We are now ready to state  the main result of this section.

\begin{theorem2*}
Given a planar graph $G$, the canonical triangulation of $ \widehat{\O}(P_G)$ maps to the  Danilov-Karzanov-Koshevoy triangulation of $\F_G$ coming from the planar framing via the integral equivalence map $\phi: \widehat{\O}(P_G)\rightarrow \F_G$ given in Theorem \ref{o-f}. 
\end{theorem2*}

We prove Theorem \ref{thm2} together with Theorem \ref{thm:bij} below.

\medskip

Recall that by Theorem \ref{thm:stop} vertices of ${\O}(P_G)$ are in bijection with order ideals of $P_G$ -- indeed the vertices of ${\O}(P_G)$ are the characteristic functions of the  complements of the order ideals in the poset $P_G$. By Lemma \ref{lem:stan}  the vertices of $\widehat{\O}(P_G)$ are also naturally indexed by the order ideals of  $P_G$.  Let  $f_I$ be the vertex of $\widehat{\O}(P_G)$ indexed by the order ideal
$I$ of $P_G$.  Given a planar graph $G$ we say that a route $R$ of $G$ separates the order ideal $I$ and the complement  $P_G \setminus I$ if the elements of $P_G$ below the route $R$ in the planar drawing of $G$ and the truncated dual $P_G$ are exactly the elements of the order ideal $I$.

\begin{proposition} \label{prop:vert}
Given a vertex $f_I$ of  $\widehat{\O}(P_G)$ indexed by the order ideal
$I$ of $P_G$ we have that  $\phi(f_I)$ is the unit flow along the route $R$ in $G$
separating $I$ and $P_G \setminus I$.  Moreover, any route $R$ in $G$
separates some order ideal $I$ and $P_G \setminus I$.
\end{proposition}

\begin{proof}
As explained in Section \ref{sec:flow},  the elements of $P_G$ correspond to
bounded regions defined by $G$. Given an indicator function $f_I$ for
the complement of an order ideal $I$ of this poset, by Definition \ref{def:phi} the flow $\phi(f_I)$ is the specified unit flow. See
Figure~\ref{linext} for an example.  
\end{proof}

Next, we define the map $\Phi_{\Delta} $  between linear extensions of $P_G$, which index the top dimensional simplices in the canonical triangulations of $\widehat{O}(P_G)$,  and sets of routes corresponding to the vertices of top dimensional simplices in a DKK triangulation of  $\F_G$ (the latter is shown in Theorem \ref{thm:bij}). 

\begin{definition}
Given a linear extension ${\bf a} = a_1 \cdots
a_m$ of $P_G$, let $\Phi_{\Delta}({\bf a})$ be the following set of routes of
$G$ determined by the order ideals whose elements are the letters in  the prefixes of ${\bf a}$,
\[
\Phi_{\Delta}({\bf a}) \, := \, \{  \phi( f_{\{a_1,\cdots, a_k\}} ) \mid
k=0,1,\ldots,m\}.
\]
\end{definition}

That is, $\Phi_{\Delta}({\bf a})$ is the set of routes of $G$ separating each
of the order ideals formed from letters in the prefixes of the linear extension
$a_1,\ldots,a_m$. See Figure
\ref{linext} for an example.

\begin{figure}[hbtp]
\includegraphics[scale=0.8]{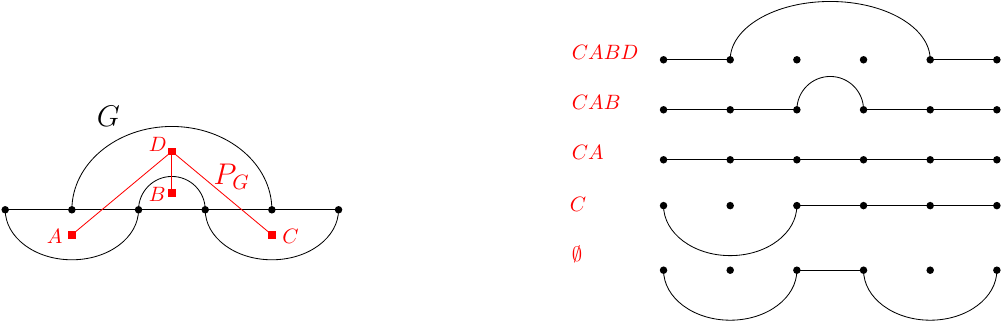}
\caption{On the left is the planar graph $G$ and the poset $P_G$
on elements $A, B, C, D$. On the right are all prefixes of the linear extension $CABD$ of $P_G$ -- each of which corresponds to an order ideal of $P_G$ --, which specify the $0$ coordinates of the vertices of $\widehat{\O}(P_G)$, and the routes these vertices  correspond to under the map $\phi$. Note that the five resulting routes form a maximal clique in $G$ with respect to the planar framing that orders both the incoming and outgoing edges top to bottom in increasing order.  Under $\Phi_{\Delta}$   the   linear extension $CABD$ is mapped to the  maximal clique in $G$ formed by the five routes on the right.}
\label{linext}
\end{figure}

\medskip

Next, we show that the routes in $\Phi_{\Delta}({\bf a})$ form a clique.

\begin{lemma} \label{lem:coh} For a planar graph $G$, fix a linear
  extension ${\bf a} = a_1 \cdots a_m$ of $P_G$ indexing a simplex  $\widehat{\Delta}_{a_1
  \cdots a_m}$ of $\widehat{O}(P_G)$. Then the routes in
  $\Phi_{\Delta}({\bf a})$ are pairwise coherent  in the planar framing of $G$. 
\end{lemma}

\proof Let $v_1$ and $v_2$ be vertices of $\widehat{\Delta}_{a_1
  \cdots a_m}$ mapping to routes $P_1$ and $P_2$ under $\phi_{v}$. It
suffices to show that $P_1$ and $P_2$ are coherent in the planar framing of $G$.

Let the coordinates of $v_1$ equal to $0$ be $x_{\hat{0}}, x_{a_1}, \ldots, x_{a_{k_1}}$ and the coordinates of $v_2$ equal to $0$ be $x_{\hat{0}}, x_{a_1}, \ldots, x_{a_{k_2}}$ and assume without loss of generality that $k_1<k_2$. Since both ${a_1}, \ldots, {a_{k_1}}$ and ${a_1}, \ldots, {a_{k_2}}$ are prefixes of the linear extension of ${a_1}, \ldots, {a_{m}}$, we see that the upper boundary of the regions corresponding to  ${a_1}, \ldots, {a_{k_1}}$ lies weakly below that of the boundary of the regions corresponding to ${a_1}, \ldots, {a_{k_2}}$, and thereby the corresponding routes $P_1$ and $P_2$ are coherent with respect to the planar framing. \qed

\begin{theorem} \label{thm:bij} Given a planar graph $G$, the map $\Phi_{\Delta}$ defined above is a bijection between linear extensions of $P_G$ and maximal cliques in $G$ in the planar framing.
\end{theorem}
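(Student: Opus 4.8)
The plan is to deduce the bijectivity of $b$ from the equality of triangulations established in Theorem~\ref{thm2}, together with one explicit identification of vertices. Recall that by Theorem~\ref{thm:stop}(i) the vertices of $\mathcal{O}(P_G)$ are indexed by the order ideals of $P_G$ (the ideal $I$ corresponding to the $0/1$-valued order-preserving function equal to $0$ on $I$ and $1$ off $I$), while the vertices of $\F_G$ are the unit flows along routes. The first step is to pin down the bijection between these two vertex sets induced by the volume-preserving affine map of Theorems~\ref{f-o} and \ref{o-f}.

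First I would show that the vertex indexed by an order ideal $I$ of $P_G$ is sent to the unit flow on the route $R_I$ that forms the upper boundary of the union of the regions belonging to $I$; this is exactly the recipe underlying $b$ applied to a single down-set. Here the planar geometry enters: a route $R$ cuts the bounded regions of $G$ into those lying above $R$ and those lying below it, and for the unit flow on $R$ the map $f$ of Theorem~\ref{f-o} satisfies $f(x)=1$ precisely when the downward path from region $x$ to the ``low point'' crosses $R$, i.e. precisely when $x$ lies above $R$. Hence $\{x : f(x)=0\}$ is exactly the ideal of regions below $R$, whose union has $R$ as its upper boundary; running this backwards identifies $I\mapsto R_I$ as claimed. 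I expect this geometric identification to be the main obstacle, since it requires checking that the separation of regions by a route is compatible both with flow conservation and with the orientation conventions defining $P_G$ and $G_P$.

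Granting this, the remainder is bookkeeping. A linear extension $a_1\cdots a_m$ of $P_G$ is exactly a maximal chain of order ideals $\varnothing=I_0\subset I_1\subset\cdots\subset I_m=P_G$ with $I_k=\{a_1,\dots,a_k\}$, and these are precisely the prefixes on which $b$ is evaluated. By Theorem~\ref{thm:stop} the $m+1$ vertices indexed by this chain are the vertices of the canonical simplex $\Delta_{a_1,\dots,a_m}$, so under the affine equivalence the vertex set of the image simplex is exactly $\{R_{I_0},\dots,R_{I_m}\}=b(a_1\cdots a_m)$. By Theorem~\ref{thm2} this image simplex is a top-dimensional simplex of the Danilov--Karzanov--Koshevoy triangulation in the planar framing, and by Theorem~\ref{k} its vertex set is a maximal clique; thus $b$ indeed lands in the set of maximal cliques of $G$.

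Finally I would verify bijectivity. For injectivity, the route-to-ideal correspondence recovers from $b(a_1\cdots a_m)$ the nested set $\{I_0,\dots,I_m\}$; ordering these by inclusion reconstructs the chain, and hence the linear extension via $a_k=I_k\setminus I_{k-1}$, so distinct linear extensions have distinct images. For surjectivity, any maximal clique $K$ indexes a top-dimensional Danilov--Karzanov--Koshevoy simplex by Theorem~\ref{k}, which by Theorem~\ref{thm2} coincides with some canonical simplex $\Delta_{a_1,\dots,a_m}$; the vertex identification of the first step then gives $b(a_1\cdots a_m)=K$. This completes the argument.
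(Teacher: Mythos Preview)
Your argument is correct. The paper does not actually prove Theorem~\ref{thm:bij}; it only says the proof is ``in the same spirit as that of Theorem~\ref{thm2}'' and leaves it to the reader, presumably meaning a direct verification that the routes in $b(a_1\cdots a_m)$ are pairwise coherent, via the same contradiction involving the two regions $A,B$ separated by $e_P^1,e_Q^1$ and $e_P^2,e_Q^2$.

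Your route is genuinely different: rather than rerunning the coherence argument, you invoke Theorem~\ref{thm2} as a black box and identify $b$ with the vertex-set map that the affine equivalence of Theorems~\ref{f-o}--\ref{o-f} induces between the two triangulations. The only new content you must supply is the explicit identification ``order ideal $I \leftrightarrow$ route $R_I$ given by the upper boundary of $\bigcup_{x\in I}x$,'' and your sketch of this via the formula $fl(e)=f(y)-f(x)$ of Theorem~\ref{o-f} (so that the unit flow supported on the ideal/filter boundary corresponds to $\chi_{P_G\setminus I}$) is exactly right. This approach buys you bijectivity for free from the equality of triangulations, and it makes transparent \emph{why} $b$ is the natural map: it is literally the simplex-to-simplex correspondence of Theorem~\ref{thm2} read off on vertex sets. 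The paper's intended direct argument would be shorter and self-contained, but yours is the more conceptual explanation of where $b$ comes from.
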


\begin{proof}[Proof of Theorems \ref{thm2} \&  \ref{thm:bij}.] 
By Theorem \ref{o-f}, $\phi$ is an integral equivalence between  $\widehat{\O}(P_G)$ and $\F_G$. In particular, the polytopes $\widehat{\O}(P_G)$ and $\F_G$ are of the same dimension and same relative volume. Therefore, the top dimensional simplices in their respective triangulations have the same number of vertices, and  the number of  simplices in any of their unimodular triangulations are the same. Thus, to show Theorems \ref{thm2} \& \ref{thm:bij} it suffices to show that   $\phi$ restricts to a bijection on the vertices of  $\widehat{\O}(P_G)$ and $\F_G$ and that the set of routes that $\Phi_{\Delta}$ maps a linear extension to are pairwise coherent in the planar framing. The former  is follows from Proposition \ref{prop:vert}, while the latter from Lemma \ref{lem:coh}.  
\end{proof}

\begin{corollary}
Given a planar graph $G$,  the number of
linear extensions of $P_G$ equals the number of maximal cliques in $G$ in any framing.
\end{corollary}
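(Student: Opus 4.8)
The plan is to deduce this immediately from Theorem~\ref{thm:bij}. That theorem asserts that the map $b$ constructed just above it is a \emph{bijection} between the set of linear extensions of $P_G$ and the set of maximal cliques of $G$ in the planar framing. Since a bijection between two sets forces them to have equal cardinality, the statement of the corollary is simply the cardinality shadow of Theorem~\ref{thm:bij}, and there is nothing further to prove beyond invoking it.

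Concretely, I would first note that both families being counted are finite: a finite poset $P_G$ has only finitely many linear extensions, and the planar graph $G$ with its fixed planar framing has only finitely many routes and hence only finitely many maximal cliques. Then, applying Theorem~\ref{thm:bij}, the existence of the bijection $b$ between these two finite sets yields at once the equality of their sizes, that is, $e(P_G)$ equals the number of maximal cliques of $G$ in the planar framing.

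Because all of the combinatorial content is carried by Theorem~\ref{thm:bij}, there is essentially no obstacle here; the only thing worth verifying is that the two objects being counted are precisely the domain and codomain of $b$, which is immediate from its definition. Finally, I would point out, following the remark made after Theorem~\ref{thm:bij}, that composing $b$ with the natural bijection between maximal cliques in different framings (Section~\ref{ss}) extends the count to maximal cliques of $G$ in \emph{any} framing, not merely the planar one.
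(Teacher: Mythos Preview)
Your proposal is correct and matches the paper's approach: the corollary is stated immediately after Theorem~\ref{thm:bij} with no separate proof, precisely because it is the cardinality consequence of the bijection $b$. Your added remarks on finiteness and the extension to arbitrary framings via Section~\ref{ss} are accurate elaborations of what the paper leaves implicit.
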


\proof The statement is immediate from Theorem \ref{thm2} for the planar framing. But since the Danilov-Karzanov-Koshevoy triangulations are unimodular,   the number of maximal cliques in $G$ is independent of the framing. \qed

\section{Triangulations of  flow   polytopes of general graphs}
\label{sec:tri}

In Theorem \ref{thm:bij} we gave a bijection from linear extensions of
$P_G$ to maximal cliques of $G$ in the planar framing. In  this
section we will see that given any two framings of a graph $G$ (not necessarily
planar) there is a natural bijection between their sets of
maximal cliques. Therefore, combining the bijection from Theorem
\ref{thm:bij} and the one just mentioned, we obtain a bijection
between linear extensions of $P_G$ and maximal cliques in any framing
of a planar graph $G$.

More generally, this section is devoted to studying the set of 
DKK triangulations of a flow polytope $\F_G$ and the framed
Postnikov-Stanley (PS) triangulations of $\F_G$, which we define in this section. 
We show that the set of 
DKK triangulations of a flow polytope $\F_G$ is equal  the set of framed PS triangulations of $\F_G$. 
As a consequence of our proof, we obtain a bijection between the objects indexing 
the PS triangulation of a flow polytope $\F_G$, namely,  nonnegative
integer flows on the graph $G$ with netflow $(0, d_2, \ldots, d_{n-1},
-\sum_{i=2}^{n-1} d_i)$, where $d_i$ is the indegree of vertex $i$ in
$G$ minus $1$\footnote{See Definition \ref{def:flow} and the
  discussion in Section \ref{subsec:31} for the relation of
  nonnegative integer flows with a given netflow vector to Kostant
  partition functions as well as Theorem \ref{ps}.}, and the
objects indexing the  DKK triangulation of a flow polytope $\F_G$,
namely,  maximal cliques in a fixed framing of $G$.  This answers
Postnikov's question \cite{P13} about a bijection between the  sets
indexing the maximal simplices of both triangulations. We also obtain a natural bijection between the sets of maximal cliques of $G$ in different framings, as mentioned in the previous paragraph.

\subsection{Framed Postnikov-Stanley triangulations} \label{sec:ps} We now define framed Postnikov-Stanley triangulations. 
These triangulations were used in \cite{MM}, though they were not
described explicitly there, and we follow closely the exposition
therein. 

A {\bf bipartite noncrossing tree} is a  tree with 
 {left vertices}
$x_1,\ldots,x_{\ell}$ and {right vertices $x_{\ell+1},\ldots,
  x_{\ell+r}$} with no pair of edges $(x_p,x_{\ell+q}),
(x_t,x_{\ell+u})$ where $p<t$ and $q>u$. 
 We denote by
$\mathcal{T}_{\I,\O}$ the set of  bipartite noncrossing trees
where $\I$ and $\O$ are the ordered
sets $(x_1,\ldots,x_{\ell})$ and $(x_{\ell+1},\ldots,x_{\ell+r})$ respectively.  We have that $\#
\mathcal{T}_{\I,\O}=\binom{\ell+r-2}{\ell-1}$, since the elements of  $\mathcal{T}_{\I,\O}$  are in
bijection with weak compositions of $\ell-1$ into $r$ parts. A
tree $T$ in $\mathcal{T}_{\I,\O}$ corresponds to the composition
$(b_1,\ldots,b_r)$ of $(\text{indegrees $-1$})$, where $b_i$ denotes
the number of edges incident to the right vertex $x_{\ell+i}$ in $T$
minus ~$1$. 

\begin{example}
The bipartite tree in Figure~\ref{ordering} corresponds to the
composition $(1,0,2)$.
\end{example}

We now define what we mean by a \text{reduction} at vertex $i$ of a
framed graph $G$ on the vertex set $[n]$.  Let $\I_i$ denote the
multiset of incoming edges and $\O_i$ the multiset of outgoing edges of $i$. In addition, we assume that $\I_i$ and $\O_i$ are linearly ordered according to the framing of $G$.  A reduction performed at $i$ of $G$ results in several new graphs indexed by  bipartite noncrossing trees on the left vertex set $I_i$ and right  vertex set $\O_i$. We define these new graphs precisely below.

Consider a tree $T \in \mathcal{T}_{\mathcal{I}_i,\mathcal{O}_i}$. For each tree-edge $(e_1,e_2)$ of $T$ where $e_1=(r,i) \in \mathcal{I}_i$ and $e_2=(i,s) \in \mathcal{O}_i$, let $e_1+e_2$ be the following edge:
\begin{equation}\label{treeEdge}
e_1+e_2=(r,s).\end{equation} 
We call the edge $e_1+e_2$ the \textbf{sum of  edges}. Alternatively
you can consider this as a path in $G$ consisting of edges $e_1$ and $e_2$. Inductively, we can also define the sum of more than two consecutive edges. 

Given $T$ in $\mathcal{T}_{\mathcal{I}_i,\mathcal{O}_i}$, let
$G^{(i)}_T$ be the graph obtained from $G$ by removing the vertex $i$
and all the edges of $G$ incident to $i$ and adding  the multiset of
edges $\{\{e_1+e_2 ~|~ (e_1,e_2)\in E(T)\}\}$. See Figures \ref{G_T}, \ref{fig:subdivhrA} and \ref{fig:big} for examples of $G^{(i)}_T$.

Given a tree $T$ in $\mathcal{T}_{\I_i, \O_i}$, a \textbf{reduction of
  $G$ at the vertex $i$  with respect to $T$}  replaces $G$ by the
graphs in $G^{(i)}_T$ defined
above. The reduction also keeps track which sum of the edges of $G$ is  each edge of the new graphs (allowing for the sum of only one element, when an edge was left intact). 

\begin{figure}
\subfigure[]{
\includegraphics[width=7cm]{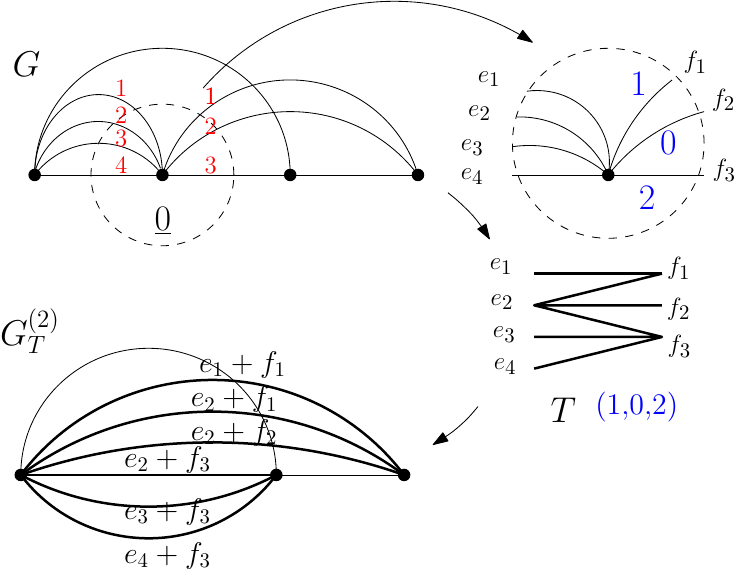}
}
\quad 
\subfigure[]{
\includegraphics[width=7cm]{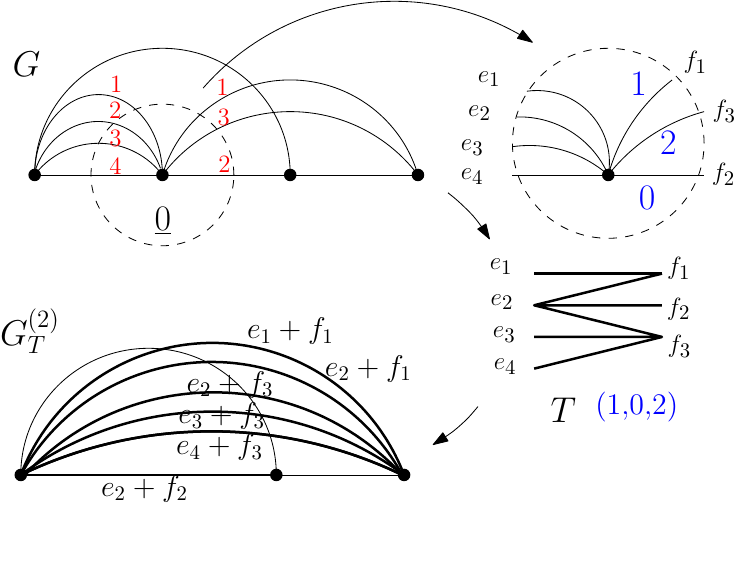}
}
\caption{Replacing the incident edges of vertex $2$ in  a graph $H$,
  by a noncrossing tree $T$ encoded by the composition $(1,0,2)$ of
  $3=indeg_H(2)-1$ using two different framings (indicated by the blue
  numbers incident to vertex $2$ in $G$): (a) the framing is increasing top to
  bottom, (b) different framing.}
\label{G_T}
\end{figure}

\begin{figure}
\includegraphics[height=2cm]{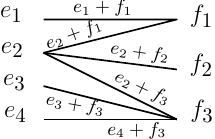}
\caption{Based on the noncrossing tree: $S(f_1)=\{e_1+f_1,
  e_2+f_1\},$ $S(f_2)=\{e_2+f_2\}$, and
  $S(f_3)=\{e_2+f_3, e_3+f_3, e_4+f_3\}$. The local orderings of these edges at the vertices to which they are incoming are $e_1+f_1<e_2+f_1$ and $e_2+f_3<e_3+f_3<e_4+f_3$.}
\label{ordering}
\end{figure}

We now define an  \textbf{inheritance framing} of $G^{(i)}_T$ for $T$
in $\mathcal{T}_{\I_i,\O_i}$, which it inherits from the framing of
$G$ as follows:

\begin{itemize}
\item[(i)] The edges incident to a vertex $j$ smaller than $i$ in $G^{(i)}$ are
in bijection with edges incident to vertex $j$ in $G$. We order the edges in $G^{(i)}$ in the same way as they are ordered in $G$. 
\item[(ii)] For each vertex $j$ greater than $i$ the  multiset of outgoing edges $\O_j(G^{(i)}_T)$ equals  $\O_j(G)$. We order these the edges of $\O_j(G^{(i)}_T)$ the same way the edges   $\O_j(G)$ are ordered.
\item[(iii)] 
For each vertex $j$ greater than $i$, if $\I_j(G)=\{m_1, \ldots,m_k\}$ (the
 multiset linearly ordered according to the framing of $G$),  then
 the multiset $\I_j(G^{(i)}_T)$ consists of edges that are sums of
 edges of $G$ (potentially the empty sum) with an edge of
 $\I_j(G)$. Thus denote by
 $S(m_l)$, $l\in[k]$, the edges in  $\I_j(G^{(i)}_T)$ which are sums
 of edges of $G$ (potentially the empty sum) with $m_l$.  Then let  any edge in $S(m_p)$ be less than any edge in  $S(m_q)$ for $p<q$, $p, q \in [k]$. We now specify the ordering of the edges within the sets $S(m_l)$, $l \in [k]$.    
 If  $S(m_l)=\{m_l\}$  then there is nothing to specify. If  $S(m_l)\neq \{m_l\}$, then draw $T$ with the left and right sets of vertices ordered
 vertically following the linear order of $\I_i$ and $\O_i$ from the
 framing of $G$. We order the edges in
 $S(m_l)$ following the order on the edges of the noncrossing bipartite tree
 $T$ when viewed from top to bottom (smallest edge to largest).  See  Figure~\ref{ordering} for an example. 
\end{itemize}

Next we describe what we refer to as the  \textbf{framed Postnikov-Stanley (PS)
  triangulations} of $\F_G$. 

Given a framed graph $(G, \prec)$ on the vertex set $[n]$, and a nonnegative integer flow
$\ifl(\cdot)$ on $G$ with netflow $(0,d_2,d_3,\ldots,d_{n-1}, -\sum_i
d_i)$, where $d_i=indeg_i(G)-1$, we explain how to obtain a simplex $\Delta_{\ifl}^{(G, \prec)}$, such that as $\ifl$ runs over all nonnegative integer flow $G$ with netflow $(0,d_2,d_3,\ldots,d_{n-1}, -\sum_i
d_i)$ we obtain a set of simplices $\Delta_{\ifl}^{(G, \prec)}$ that are the top dimensional simplices of a triangulation of $\F_G$. It is this triangulation that we term the framed Postnikov-Stanley (PS)
  triangulation.  

Given a framed graph $(G, \prec)$ on the vertex set $[n]$, and a nonnegative integer flow
$\ifl(\cdot)$ on $G$ with netflow $(0,d_2,d_3,\ldots,d_{n-1}, -\sum_i
d_i)$ we read off the nonnegative integer flow values specified by $\ifl(\cdot)$ on the edges of
$\mathcal{O}_2(G)$ yielding a composition $(c_1, \ldots, c_{\# \mathcal{O}_2(G)})$ of $d_2 =
\#\mathcal{I}_2(G)-1$. Here $c_j$ corresponds to the flow value on the $j$th largest edge in $\mathcal{O}_2(G)$ in the framing. Using this composition $(c_1, \ldots, c_{\# \mathcal{O}_2(G)})$ of  $d_2 =
\#\mathcal{I}_2(G)-1$ , we  build a bipartite tree $T_2$   in
$\mathcal{T}_{\I_2,\O_2}$ as follows. The sets $\I_2$ and $\O_2$ have an ordering in the framing of $G$. Assume that this ordering is $\I_2=\{v_1< \cdots< v_{\# \I_2}\}$ and  $\O_2=\{w_1< \cdots< w_{\# \O_2}\}$. Draw the bipartite tree  $T_2$ in
$\mathcal{T}_{\I_2,\O_2}$  with the left vertex set $\I_2=\{v_1< \cdots< v_{\# \I_2}\}$ so that the vertices $v_1< \cdots< v_{\# \I_2}$ are ordered top to bottom vertically on the left. Similarly, the right vertices $w_1< \cdots< w_{\# \O_2}$ are drawn top to bottom vertically on the right. We let the degree of vertex $w_j$ in $T_2$ be $c_j+1$. The above uniquely determines the noncrossing bipartite tree $T_2$ on left and right vertex sets $\I_2$ and $\O_2$. See Figure   \ref{G_T} for an example. 
  With  tree $T_2$   constructed,  we do a reduction at vertex $2$  to
obtain $G_2 := G_{T_2}^{(2)}$ with an inheritance framing.  

Recursively, given $G_{i-1}$, we read off the integer flow values from $\ifl(\cdot)$ on the edges of
$\mathcal{O}_{i}(G) = \mathcal{O}_{i}(G_{i-1})$. These flow values can be seen as components of a composition $(c_1, \ldots, c_{\# \mathcal{O}_i(G)})$ of 
\[
d_i + \sum_{e, \fin(e)=i} \ifl(e) \,=\,\#\mathcal{I}_i(G_{i-1})-1.
\]
\
The component $c_j$ corresponds to the flow value on the $j$th largest edge in $\mathcal{O}_i(G)=\mathcal{O}_i(G_{i-1})$ in the framing. 
From this composition    $(c_1, \ldots, c_{\# \mathcal{O}_i(G)})$ we build a
bipartite tree $T_i$   in
$\mathcal{T}_{\mathcal{I}_i(G_{i-1}),\mathcal{O}_i(G_{i-1})}$ as follows. The sets $\mathcal{I}_i(G_{i-1})$ and $\mathcal{O}_i(G_{i-1})$ have an ordering in the inheritance framing of $G_{i-1}$. Assume that this ordering is $\mathcal{I}_i(G_{i-1})=\{v_1< \cdots< v_{\# \mathcal{I}_i(G_{i-1})}\}$ and  $\mathcal{O}_i(G_{i-1})=\{w_1< \cdots< w_{\# \mathcal{O}_i(G_{i-1})}\}$. Draw the bipartite tree  $T_i$   in
$\mathcal{T}_{\mathcal{I}_i(G_{i-1}),\mathcal{O}_i(G_{i-1})}$  with the left vertex set $\mathcal{I}_i(G_{i-1})=\{v_1< \cdots< v_{\# \mathcal{I}_i(G_{i-1})}\}$ so that the vertices $v_1< \cdots< v_{\# \mathcal{I}_i(G_{i-1})}$ are ordered top to bottom vertically on the left. Similarly, the right vertices $w_1< \cdots< w_{\# \mathcal{O}_i(G_{i-1})}$ are drawn top to bottom vertically on the right. We let the degree of vertex $w_j$ in $T_i$ be $c_j+1$. The above uniquely determines the noncrossing bipartite tree $T_i$ on left and right vertex sets $\mathcal{I}_i(G_{i-1})$ and $\mathcal{O}_i(G_{i-1})$.   With  tree $T_i$   constructed  we do a reduction at vertex $i$  to
obtain $G_i := (G_{i-1})_{T_i}^{(i)}$ with an inheritance framing.    We iterate this for $i=1,\ldots,n-1$. See Figure \ref{fig:big} for an example.

 \begin{figure}
\begin{center}
\includegraphics[scale=.9]{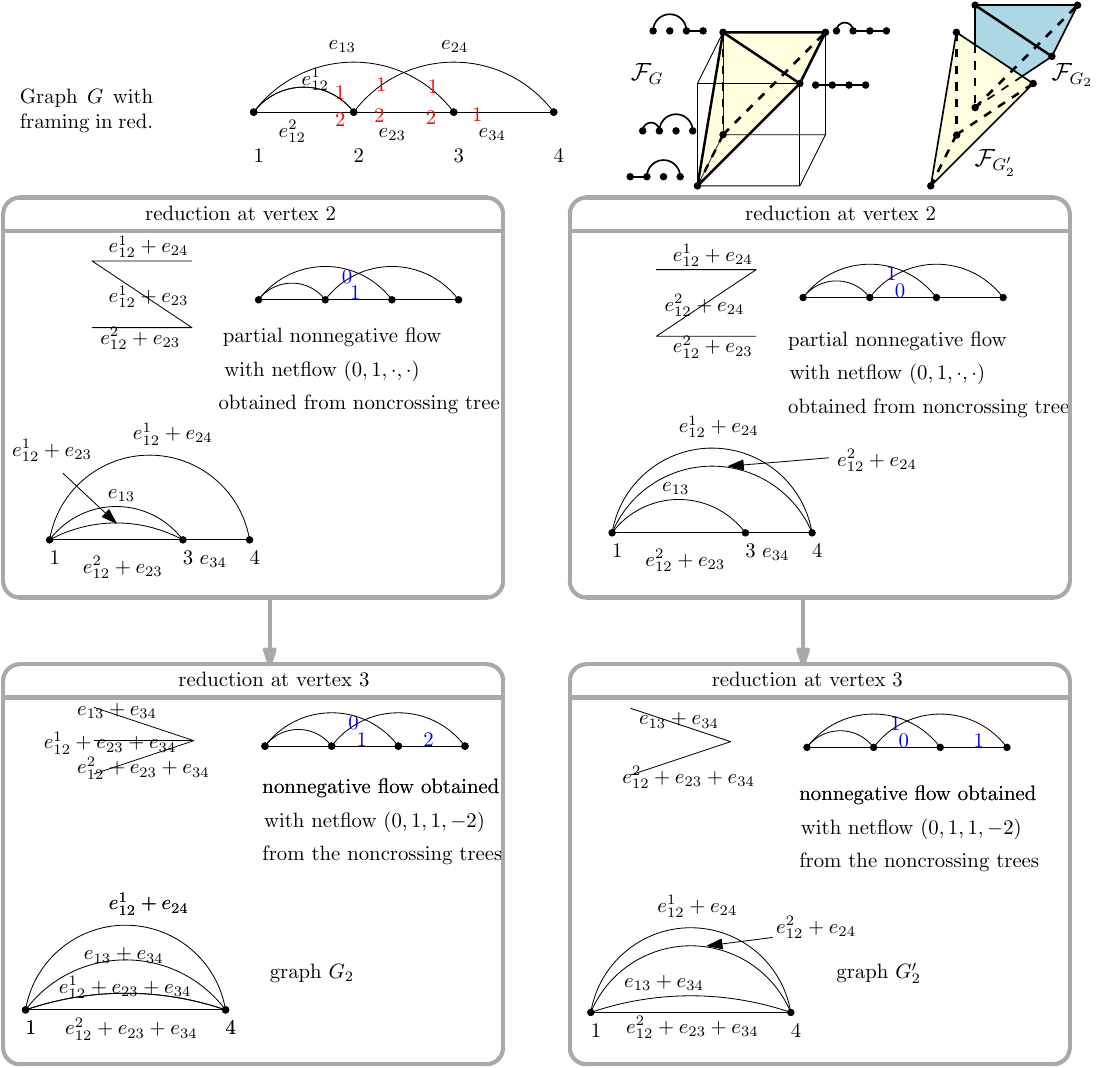}
\end{center}
\caption{Reductions executed at vertex $2$ and $3$ of the framed graph $G$. Noncrossing trees encoding the reduction are displayed with all edges labeled. The nonnegative flow on $G$ with netflow $(0,1,1,-2)$ is built. The flow polytope $\F_G$ is dissected into two simplices corresponding to $G_2$ and $G_2'$.}
\label{fig:big}
\end{figure}

Thus, from the integer flow $\ifl(\cdot)$ we obtain a tuple of bipartite
noncrossing trees $(T_2,T_3,\ldots,T_{n-1})$ such that $G_i :=
(G_{i-1})_{T_i}^{(i)}$ for $i=2,\ldots,n-1$ and $G_1 := G$. Since $G_{n-1}$
has no incoming or outgoing edges to vertices $i=2,\ldots,n-1$ then
$G_{n-1}$ consists of two vertices $1$ and $n$ and $\#E(G)-n+2$ multiple
edges.  Thus $\mathcal{F}_{G_{n-1}}$ is  a
$(\#(G)-n+1)$-simplex. 

Recall that each
such multiple edge $e$ in $\mathcal{F}_{G_{n-1}}$ is a
sum of edges of the original graph of $G$ as explained in the beginning of this section. Such sum of edges
corresponds to a route in the graph  $G$, i.e. a directed path from
vertex $1$ and $n$ in $G$. We denote the unit flow in $G$ along the route corresponding to edge $e$
in $G_{n-1}$ by $\rho(e)$ and we let 

$$\Delta_{\ifl}^{(G,\prec)}:={\rm ConvHull}\{\rho(e) \mid e \in E({G_{n-1}})\}$$ be the simplex
with vertices $\rho(e)$. Note that
$\Delta_{\ifl}^{(G,\prec)}$ is integrally equivalent to $\mathcal{F}_{G_{n-1}}$, and it is a subset of $\mathcal{F}_{G}$.

Postnikov and Stanley proved
Theorem~\ref{ps} by showing that this iterative construction of
simplices from integer flows yields a triangulation of
$\mathcal{F}_G$. Denote by $\F_G^{\integer}(0,d_2,\ldots,d_{n-1},-\sum_i d_i)$ \textbf{ the set of nonnegative integer flows on the graph $G$ with netflow} $(0,d_2,\ldots,d_{n-1},-\sum_i d_i)$.

\begin{theorem}[{cf. \cite[\S 6.1]{MM}}] \label{psflows}
Given a framed graph $(G,\prec)$, the set of simplices 
\[
\{\Delta_{\ifl}^{(G,\prec)} \mid
\ifl \in \F_G^{\integer}(0,d_2,\ldots,d_{n-1},-\sum_i d_i) \},
\]
 where $d_i=indeg_i(G)-1$, are the top simplices of a unimodular triangulation of $\F_G$.  
\end{theorem}

\begin{remark} \label{rem:more}
Note that the triangulation in \cite[\S
6.1]{MM} comes from the top to bottom framing of the graph.  Theorem \ref{psflows}  yields a triangulation for any  framing of the graph. The proof in \cite[\S
6.1]{MM} adapts readily for an arbitrary framing. Indeed,   more general triangulations can be constructed in the above way that do not depend on a fixed framing of the graph $G$; we only need to specify some (any) ordering of edges at each vertex as we do the reductions. 
\end{remark}

\begin{figure}
\includegraphics{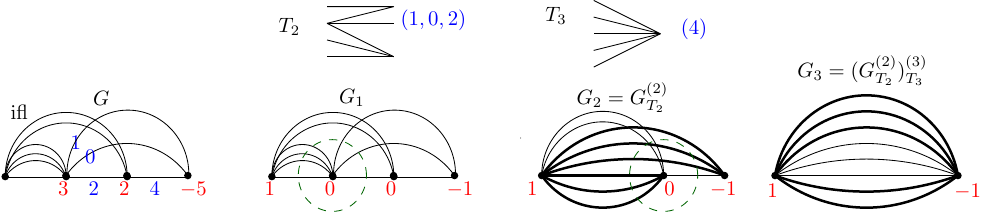}
\caption{Example in the Postnikov--Stanley triangulation of
$\F_G$ of how to find a
simpex $\F_{G_3}$ from an integer flow $\ifl$ in $\F_{G}(0,d_2,d_3,-d_2-d_3)$ where $d_i=indeg_i(G)-1$. Each step of the subdivision is encoded by noncrossing
trees $T_{i+1}$ that are equivalent to compositions
${(b_1,\ldots,b_r)}$ of $\#\mathcal{I}_{i+1}(G_i)-1$ with
$\#\mathcal{O}_{i+1}(G_i)$  parts. These trees or compositions are
read from the integer flow. The framing used is top to bottom.}
\label{fig:subdivhrA}
\end{figure}

 \subsection{The set of DKK triangulations equals the set of  framed PS triangulations} \label{ss} 
In this section we show that with a fixed framing $(G, \prec)$ the  DKK triangulations  and the PS triangulation are identical. In effect, the set of DKK triangulations equals the set of  framed PS triangulations. We also give an 
 explicit bijection between the objects
indexing a DKK triangulation of $\F_G$ for a framing of $G$
and a framed PS triangulation of $\F_G$, namely a bijection between
maximal cliques of $G$ with respect to a fixed framing and nonnegative integer flows of $G$ with netflow $(0,d_2,\ldots,d_{n-1},-\sum_i d_i)$.

The following results show that the vertices of a simplex
$\Delta_{\ifl}^{(G, \prec)}$ correspond to a maximal clique of the framed graph $(G, \prec)$. 
Recall that the simplex $\Delta_{\ifl}^{(G, \prec)}$ is integrally equivalent to the flow
polytope $\F_{G_{n-1}}$ of a graph $G_{n-1}$ consisting of vertices
$1$ and $n$ and $\#E(G)-n+2$ multiple edges $(1,n)$ and that the set of simplices $\Delta_{\ifl}^{(G, \prec)}$, as $\ifl$ runs over all flows in $\F_{G}^{\integer}(0,d_2,d_3,d_4,d_5,
-\sum_i d_i)$ forms the top dimensional simplices of a unimodular triangulation of $\F_G$ as shown in Theorem \ref{psflows}.

\begin{proposition} \label{dkk-ps} Given a framed graph $(G, \prec)$ with
  vertices $[n]$ and a nonnegative integer flow $\ifl \in \F_{G}^{\integer}(0,d_2,d_3,d_4,d_5,
-\sum_i d_i)$, where $d_i=indeg_i(G)-1$,  the 
  routes of $G$ along which the unit flows  give the vertices of  the simplex $\Delta_{\ifl}^{(G, \prec)}$ form a maximal clique with respect to the
  coherence relation in $(G, \prec)$. 
\end{proposition}
 
\begin{proof} Recall that $\Delta_{\ifl}^{(G, \prec)} \intequiv \F_{G_{n-1}}$,  for some  $G_{n-1}$ as described in Section \ref{sec:ps}.  Recall that a sequence of graphs $G_1:=G, G_2, \ldots, G_{n-1}$ encode the successive reductions leading to the simplex   $\Delta_{\ifl}^{(G, \prec)} \intequiv \F_{G_{n-1}}$. Graph $G$ has a framing, and the framing of  graph $G_i$, $i \in [2,n-1]$, is the inheritance framing obtained from the framing of $G_{i-1}$.
Suppose that to the contrary, there are two vertices of the simplex
$\Delta_{\ifl}^{(G, \prec)}$, which correspond to non-coherent routes $P$ and $Q$ in $G$. Suppose that $P$ and $Q$ are not coherent at the common inner vertex $v$. Suppose that the smallest vertex after which $Pv$ and $Qv$ agree is $w_1$ and the largest vertex before which $vP$ and $vQ$ agree is $w_2$. Let the edges incoming to $w_1$ be $e_P^1$ and $e_Q^1$ for $P$ and $Q$, respectively, and let  the edges outgoing from $w_2$ be $e_P^2$ and $e_Q^2$ for $P$ and $Q$, respectively. Since $P$ and $Q$ are not coherent at $v$, this implies that either $e_P^1\prec_{\i(w_1)} e_Q^1$ and $e_Q^2\prec_{\out(w_2)} e_P^2$ or $e_Q^1\prec_{\i(w_1)} e_P^1$ and $e_P^2\prec_{\out(w_2)} e_Q^2$. We also have that the segments of $P$ and $Q$ between $w_1$ and $w_2$ coincide.

Denote by $p$ the sum of edges between $w_1$ and $w_2$ on $P$. Denote
by $*(e_{{Z}}^1+p)$, for $Z\in \{P, Q\}$, the sum of edges left of
$w_2$  that are edges in ${Z}$  (including $e_{{Z}}^1$ in particular). 
 After a
certain number of reductions executed according to the framing, we are
about to perform the reduction at vertex $w_2$. This reduction involves
deleting $w_2$ and the edges incident to it, and adding the edges
obtained from the noncrossing tree $T$ we constructed based on the
ordering of the incoming and outgoing edges at $w_2$. 
In such a noncrossing tree, the vertex corresponding to the edge
stemming from $*(e_{{Z}}^1+p)$,  $Z\in \{P, Q\}$, is above the vertex
$*(e_{{\overline{Z}}}^1+p)$, where $\overline{Z}$ is the complement of
$Z$ in $\{P,Q\}$, in the left
partition of the vertices of $T$. On the other hand, the vertex corresponding
to  $e_{\overline{Z}}^2$ is above the vertex corresponding to  $e_Z^2$
in the right  partition of the vertices of $T$. Thus,  it is
impossible to obtain both routes $P$ and $Q$ as vertices of
$\F_{G_{n-1}}$
since that would force connecting  $*(e_{{Z}}^1+p)$ and $e_{{Z}}^2$ as
well as $*(e_{\overline{Z}}^1+p)$ and $e_{\overline{Z}}^2$ in
$T$. This would make a crossing in the noncrossing tree $T$, a contradiction.
 \end{proof}

Proposition~\ref{dkk-ps} justifies the following definition. 

\begin{definition}
Given a framed graph $(G,\prec)$ on the vertex set  $[n]$, let $$\Lambda^{(G,\prec)}:
  \mathcal{F}^{\integer}_G(0,d_2,\ldots,d_{n-1},\sum_i d_i) \to
  \mathcal{C}^{\max}(G,\prec)$$ be the map defined by
  $\Lambda^{(G,\prec)}(\ifl)=C$, where the
  vertices of the simplex $\Delta_{\ifl}^{(G,\prec)}$ are the unit flows along routes in the
  maximal clique $C$ and $d_i=indeg_i(G)-1$. 
\end{definition}

\begin{example}
Figure~\ref{fig:routes} gives an example of the bijection $\Lambda^{(G,\prec)}$ between the
two integer flows in $\F^{\integer}_G(0,d_2,d_3,-d_2-d_3)$ and the two
maximal cliques with respect to the framing of $G$ given in Figure~\ref{fig:big}.
\end{example}

\begin{example} \label{ex:bijection_flow2clique}
Figure~\ref{fig:example_lambda} gives a larger example of the bijection
$\Lambda^{(G,\prec)}$ between an integer flow in
$\F^{\integer}_{K_6}(0,0,1,2,-3)$ and a maximal clique of $K_6$.
\end{example}

\begin{figure}
\includegraphics{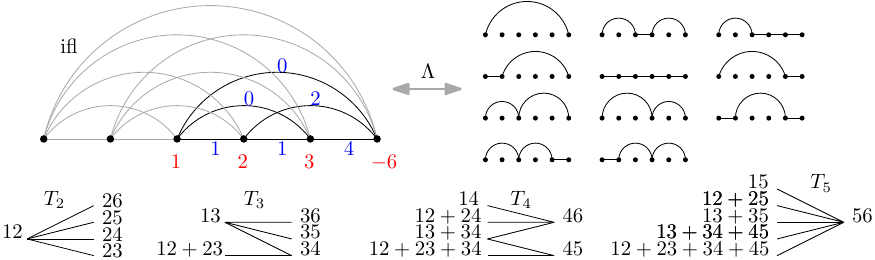}
 \caption{
Example of the bijection $\Lambda = \Lambda^{(G,\prec)}$ between an nonnegative integer flow $\ifl$ in $\F_{G}(0,d_2,d_3,d_4,d_5,
-\sum_i d_i)$ where $G=K_6$ and $d_i=indeg_i(G)-1$. Below are the noncrossing trees $T_i$ with left vertices
$\mathcal{I}_i(G_{i-1})$ and right vertices $\mathcal{O}_i(G_{i-1})$
written as sums of edges of $G$ ($ij$ is shorthand for the edge $(i,j)$). The
framing used is top to bottom.}
\label{fig:example_lambda}
\end{figure}

 \begin{figure}
\begin{center}
\includegraphics[scale=.9]{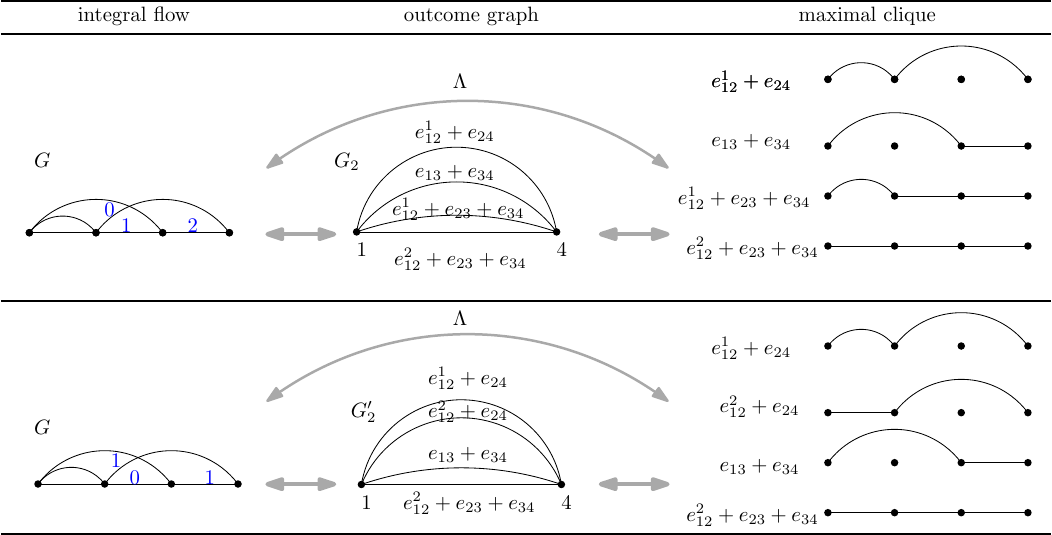}
\end{center}
\caption{ The graphs $G$,  $G_2$ and  $G'_2$  as well as the edge labels $e_{ij}$ are of the edges of the graph $G$   are as in Figure \ref{fig:big}. The four paths on the top correspond to the vertices of the
  simplex given by $G_2$.  The four paths
  on the bottom correspond to the vertices of the simplex given by
  $G'_2$. Both sets of paths are coherent
  in the top to bottom framing of $G$ given in  Figure
  \ref{fig:big}.}
\label{fig:routes}
\end{figure}

We now have:

 \begin{theorem} \label{thm:subset}  Given a framed graph $(G,\prec)$ on the vertex set  $[n]$
the   Danilov-Karzanov-Koshevoy
 triangulations of $\F_G$ with respect to this framing is  the framed Postnikov-Stanley triangulations of $\F_G$ with respect to the same framing. Moreover,     the map
$\Lambda^{(G,\prec)}$ defined above is a bijection between nonnegative  integer flows in
$\mathcal{F}_G^{\integer}(0,d_2,\ldots,d_{n-1},-\sum_i d_i)$, where $d_i=indeg_i(G)-1$,   and
maximal cliques in $\mathcal{C}^{\max}(G,\prec)$. 
 \end{theorem}

\proof  Fix a framing $(G,\prec)$. Proposition
 \ref{dkk-ps} shows that the framed PS triangulation with respect to this framing is the same as the DKK triangulation  with respect to this framing. Therefore, any DKK triangulation is a framed PS triangulation. In particular,  $\Lambda^{(G,\prec)}$ is a bijection that simply sends one set of labelings of a fixed triangulation of $\F_G$ into another set of labelings of the very same triangulation of $\F_G$. \qed
  
  \medskip

We conclude by noting that there is a nice way to describe the inverse
of the map $\Lambda^{(G,\prec)}$:

\begin{lemma} \label{lemma:inverse_bijecion}
Fix  a framed graph $(G,\prec)$ and a   flow
$\ifl \in \F_G^{\integer}(0,d_2,\ldots,d_{n-1},-\sum_i d_i)$, where $d_i=indeg_i(G)-1$. If   
$\Lambda^{(G,\prec)}(\ifl)=C$,  then each edge $e$ of the graph $G$ appears $\ifl(e)+1$
times as an edge of one of the paths ending in $v={\fin(e)}$ in the set (not multiset!) $\{Pv  \mid
 P \in C, v={\fin(e)}\}$ of prefixes of routes in the clique $C$. In particular, given a maximal clique $C \in \mathcal{C}^{\max}(G, \prec)$ the inverse $(\Lambda^{(G,\prec)})^{-1}(C)$ is given by \[
((\Lambda^{(G,\prec)})^{-1}(C))(e)  =  n(e) -1,  
\] where    $n(e)$ is the number of times edge $e$ appears in set
of prefixes $\{Pv  \mid
 P \in C, v={\fin(e)}\}$. \end{lemma}

 \begin{proof}
By the construction of $\Lambda^{(G,\prec)}$, from the integer flow
$\ifl(\cdot)$ we obtain a tuple of noncrossing bipartite trees
$(T_2,T_3,\ldots,T_{n-1})$ such that $G_1=G$ and $G_i :=
(G_{i-1})^{(i)}_{T_i}$ for $i=2,\ldots,n-1$ where $G_{n-1}$ is a graph
with vertices $1$ and $n$ and $\#E(G)-n+2$ multiple edges where each
multiple edge is a sum of edges of the original graph $G$ defining a
route of the maximal clique $C$. 

The edges of intermediate graphs $G_2,\ldots,G_{n-2}$ for
$i=2,\ldots,n-2$ encode prefixes of the routes in the clique $C$ as follows: for the edge $e=(i,j)$ in $\mathcal{O}_i(G)=\mathcal{O}_i(G_{i-1})$, the tree
$T_i$ in $\mathcal{T}_{\mathcal{I}_i(G_{i-1}),\mathcal{O}_i(G_{i-1})}$
has  $\ifl(e)+1$ tree-edges incident  to $e$ by definition. Therefore,   the edge $e$  appears exactly $\ifl(e)+1$ times in the set of 
prefixes of the routes $\{Pv  \mid
 P \in C, v={\fin(e)}\}$.  The statement about $(\Lambda^{(G,\prec)})^{-1}(C)$ then follows readily.
\end{proof}

\begin{example}
We continue with Example~\ref{ex:bijection_flow2clique} illustrated in
Figure~\ref{fig:example_lambda}. Edge $e=(3,4)$ has flow $\ifl(e)=1$
and there are two paths ending in vertex $4$ containing $e$ in the corresponding
clique $C:=\Lambda^{(G,\prec)}(\ifl)$, namely the paths consisting of edges
$(1,2), (2,3), (3,4)$ and of edges $(1,3), (3,4)$. Note that the path $(1,3), (3,4)$ is the prefix of two routes in the clique, however, we count it here just once since in Lemma \ref{lemma:inverse_bijecion} we are looking at the set of prefixes of the routes in the clique and not a multiset of prefixes.
\end{example}

\section*{Acknowledgments} The authors are grateful to Alexander Postnikov for generously 
sharing his insights and questions. The authors  are also grateful to  the anonymus referee for
numerous helpful comments and suggestions.  AHM and JS would like to thank
ICERM and the organizers of its Spring 2013 program in {\em
  Automorphic Forms} during which part of this work was done. The authors also thank the SageMath community~\cite{Sage} for developing and sharing their code by which some of this research was conducted. 

\medskip


\begin{thebibliography}{10}

\bibitem{BV2}
W.~Baldoni-Silva and M.~Vergne.
\newblock {K}ostant partition functions and flow polytopes.
\newblock {\em Transform. Groups}, 13(3-4):447--469, 2008.

\bibitem{BR}
Y.~Baryshnikov and D.~Romik.
\newblock Enumeration formulas for {Y}oung tableaux in a diagonal strip.
\newblock {\em Israel J. Math.}, 178(1):157--186, 2010.

\bibitem{BP}
M.~Beck and D.~Pixton.
\newblock The {E}hrhart polynomial of the {B}irkhoff polytope.
\newblock {\em Discrete Comput. Geom.}, 30:623--637, 2003.

\bibitem{BR}
M.~Beck and S.~Robins.
\newblock Computing the continuous discretely: Integer-Point
Enumeration in Polyhedra, Springer-Verlag, New York, 2007.

\bibitem{BK}
R. Behrend and V. Knight.
\newblock Higher spin alternating sign matrices, 
\newblock {\em Electron. J. Combin.}, 14(1), 2007.

\bibitem{Birkhoff}
G.~Birkhoff.
\newblock Three observations on linear algebra. 
\newblock {\em Univ. Nac. Tucum´an. Revista}
A., 5:147--151, 1946.

\bibitem{CM}
E.~R. Canfield and B. D. McKay.
\newblock The asymptotic volume of the Birkhoff polytope.
\newblock {\em Online J. Anal. Comb.}, (4), Art. 2, 4, 2009.

\bibitem{CRY}
C.S. Chan, D.P. Robbins, and D.S. Yuen.
\newblock On the volume of a certain polytope.
\newblock {\em Experiment. Math.}, 9(1):91--99, 2000.



\bibitem{kosh}
V.I. Danilov, A.V. Karzanov, G. A. Koshevoy 
\newblock  Coherent fans in the space of flows in framed graphs.
\newblock {\em DMTCS proc., FPSAC 2012 Nagoya, Japan}, 483--494, 2012.

\bibitem{LLY}
J.A. De Loera, F. Liu, and R. Yoshida.
\newblock A generating function for all semi-magic squares and the
  volume of the  {B}irkhoff polytope,
\newblock {\em J. Algebraic Combin.}, 30(1):113--139, 2009.

\bibitem{Eh}
E.~Ehrhart.
\newblock Sur les poly\`{e}dres rationnels homoth\'{e}tiques \`{a} $n$
dimensions.
\newblock {\em C. R. Acad. Sci. Paris.}, 254:616-618, 1962. 

\bibitem{FK}
S.~Fomin, A.N.~Kirillov.
\newblock Reduced words and plane partitions,
\newblock {\em J. Algebraic Combin.}, 6(4):311--319, 1997.

\bibitem{gallo}
G.~Gallo and C.~Sodini.
\newblock Extreme points and adjacency relationship in the flow polytope.
\newblock {\em Calcolo}, 15:277--288, 1978.


\bibitem{KRT} Kitaev, S., Remmel, J., Tiefenbruck, M.
\newblock Quadrant marked mesh patterns in 132-avoiding permutations,
\newblock {\em Pure Math. Appl. (PU.M.A.)}, 23(3):219--256, 2012.

\bibitem{avery} 
K.~M\'esz\'aros  and A.St. Dizier. 
 \newblock {From generalized permutahedra to {G}rothendieck polynomials via flow polytopes},
 {{\tt https://arxiv.org/pdf/1705.02418.pdf}},
 {preprint, 2017}.

\bibitem{MM}
K.~M\'esz\'aros and A.H. Morales.
\newblock Flow polytopes of signed
  graphs and the {K}ostant partition function, {\em Int. Math. Res. Notices}, 3:830--871, 2015, rnt212.

\bibitem{ASM}
W.H. Mills, D.P. Robbins, and H.~Rumsey~Jr.
\newblock Alternating sign matrices and descending plane partitions.
\newblock {\em J. Combin. Theory Ser. A}, 34(3):340--359, 1983.

\bibitem{moorefield}
Moorefield, D. L., 
\newblock Partition analysis in Ehrhart theory, 
\newblock Master thesis San Francisco State University, 2007.

\bibitem{flowsite}
A. H. Morales.
\newblock Data of Ehrhart polynomials of the CRY polytope,
\url{https://sites.google.com/site/flowpolytopes/ehrhart}, 2017.


\bibitem{WM}
W.G. Morris.
\newblock {\em Constant Term Identities for Finite and Affine Root Systems:
  Conjectures and Theorems}.
\newblock PhD thesis, University of Wisconsin-Madison, 1982.




\bibitem{P13}
A.~Postnikov, 
\newblock personal communication, 2010; 2014.

\bibitem{Proctor}
R. A. Proctor, New symmetric plane partition identities from invariant theory work of De Concini and Procesi,
{\em Europ. J. Combin.} 11(3), 289–-300, 1990. 


\bibitem{schrijver}
A. Schrijver, 
\newblock {\em Combinatorial Optimization, Volume C},
\newblock Springer-Verlag Berlin Heidelberg, 2003.

\bibitem{OEIS}
Neil J.~A. Sloane.
\newblock The {O}nline {E}ncyclopedia of {I}nteger {S}equences.
\newblock \url{http://oeis.org}.


\bibitem{S} {R.P.~Stanley}.
 {\it Acyclic flow polytopes and Kostant's partition function,} Conference transparencies, 2000,  \url{http://math.mit.edu/~rstan/trans.html}.

\bibitem{Stop}
R.P. Stanley.
\newblock Two poset polytopes.
\newblock {\em Discrete Compute. Geom.}, 1:9--23, 1986.

\bibitem{EC}
R.~P.~Stanley, {\em Enumerative Combinatorics}, vol.~1 (second ed.)
and vol.~2 (first ed.), Cambridge Univ.~Press, 2012 and~1999.


\bibitem{Sage}
W.A. Stein et al. 
\newblock Sage Mathematics Software (Version 6.6).
\newblock   The Sage Developers, 2015, \url{http://www.sagemath.org}.

\bibitem{StrASM}
J.~Striker.
\newblock The alternating sign matrix polytope.
\newblock {\em Electron. J. of Combin.}, 16(R41), 2009.

\bibitem{VonNeumann}
J. von Neumann. 
\newblock A certain zero--sum two person game equivalent to the optimal
assignment problem. 
\newblock Contributions to the Theory of Games Vol. 2. Annals of
Mathematics Studies No. 28, Princeton University Press, 5--12, 1953.

\bibitem{Z}
D.~Zeilberger.
\newblock Proof of a conjecture of {C}han, {R}obbins, and {Y}uen.
\newblock {\em Electron.\ Trans.\ Numer.\ Anal.}, 9:147--148, 1999.

\end{thebibliography}
\end{document}